\theoremstyle{plain}
\newtheorem{lemma}{Lemma}
\numberwithin{lemma}{section}
\newtheorem{corollary}{Corollary}
\numberwithin{corollary}{section}
\newtheorem{theorem}{Theorem}
\numberwithin{theorem}{section}
\newtheorem{proposition}{Proposition}
\numberwithin{proposition}{section}
\theoremstyle{definition}
\newtheorem{definition}{Definition}
\numberwithin{definition}{section}
\newtheorem{remark}{Remark}
\numberwithin{remark}{section}
\newtheorem{example}{Example}
\numberwithin{example}{section}
\title{The $D_8$-tower of weak Jacobi forms and applications}
\author{Dmitry Adler and Valery Gritsenko
\footnote 
{The  authors are supported by the Laboratory of Mirror Symmetry NRU HSE 
(RF government grant, ag. no. 14.641.31.0001).}}
\date{\today}
\begin{document}\maketitle

%-------------------------------------------------------------------------------
%\thispagestyle{empty}
\begin{abstract}
We construct a tower of arithmetic  generators of the bigraded polynomial ring $J_{*,*}^{w, O}(D_n)$ of weak 
Jacobi modular forms invariant with respect to the full orthogonal group
$O(D_n)$ of the root lattice $D_n$ for $2\le n\le 8$. This tower 
corresponds to the tower of strongly reflective modular forms on the 
orthogonal groups of signature $(2,n)$ which determine the  Lorentzian 
Kac-Moody algebras related to the BCOV (Bershadsky-Cecotti-Ooguri-Vafa)-analytic 
torsions. We prove that the main three generators of index one of 
the graded ring satisfy a special system of modular differential 
equations. We found also a general modular differential equation of the 
generator of weight $0$ and index $1$ which generates the automorphic 
discriminant of the moduli space of Enriques surfaces.    
\end{abstract}

\section{Introduction}
In this paper we construct the tower of arithmetic  generators of the 
bigraded polynomial ring $J_{*,*}^{w, O}(D_n)$ of $O(D_n)$-invariant weak 
Jacobi modular forms with respect to the root lattice $D_n$ for 
$2\le n\le 8$, corresponding 
to the $D_8$-tower of strongly reflective modular forms on the orthogonal 
groups $O^+(2U\oplus D_n(-1))$ 
(see \cite{Gr18}). The strongly reflective modular forms 
of this tower of the orthogonal groups 
${\widetilde O^+(2U\oplus D_n(-1))}$ $(3\le n\le 8)$ determine the 
Lorentzian Kac--Moody algebras corresponding to the 
BCOV (Bershadsky-Cecotti-Ooguri-Vafa)-analytic torsions (see \cite{Y} and \cite{Gr18}). 

The fact that the bigraded ring  $J_{*,*}^{W}(D_n)$
of Jacobi forms invariants with respect to the Weyl group of $D_n$  
is polynomial was proved  
in \cite{Wirt}. The Wirtm\"uller theorem is the analog of the Chevalley 
theorem for invariant polynomials of a Coxeter group 
(see \cite{Lo1}--\cite{Lo2}, \cite{Sa1} and \cite{Sa2}).
However, the Wirtm\"uller's proof does not give a construction of generators. 
Moreover, this proof in the case of $D_n$ is less clear than for the case 
of $A_n$. 
An explicit construction of generators of the polynomial ring 
$J_{*,*}^{W}(R)$ (where the root system $R\ne E_8$) is important for 
determining the flat coordinates in the theory of Frobenius varieties
(see \cite{Sa3}, \cite{DZ}, \cite{Sat} and \cite{Ber1}--\cite{Ber2}). 
For example, in \cite{Sat} one can find (without detailed proof) a 
construction of the generators in the $E_6$ case. In 
\cite{Ber1}--\cite{Ber2} the cases of $A_n$, $B_n$ and $G_2$ were considered independently 
from \cite{Wirt}. 
We note that H. Wang proved recently that  $J_{*,*}^{W}(E_8)$ is not 
polynomial (see \cite{Wa}).

As we mentioned above, our interest to generators of $J_{*,*}^{w, W(D_n)}$ 
is explained by existence of the $D_8$-tower of the reflective 
automorphic discriminants starting with the Borcherds--Enriques modular 
form
$$
\Phi_4\in M_4(O^+(U\oplus U(2)\oplus E_8(-2)),\chi_2)=M_4(O^+(U\oplus U\oplus 
D_8(-1)), \chi_2)
$$ 
which is the discriminant of the moduli space of Enriques surfaces (see 
\cite{Bo}, \cite{K} and \cite[\S 5]{Gr18} or Arxiv.1005.3753). 
Due to this relation we are interested in the generators of the 
$O(D_n)$-invariant weak Jacobi modular forms with respect to the root 
lattice $D_n$ where $O(D_n)$ is the full orthogonal group of the lattice 
$D_n$. We note that our construction is quite arithmetic, and it gives us 
a rather interesting system of non-linear differential equations related
to the main arithmetic generators of the bigraded ring of Jacobi forms
(see \S 5).

All reflective automorphic discriminants of the $D_8$-tower
are determined by the last generators
$\varphi_{0,1}\in J_{0,1}^{O(D_n)}$ ($2\le n\le 8$)
of the bigraded ring of $J_{*,*}^{O(D_n)}$, which contains two types of
generators of index one and two. See \S 3.
All  generators of index $2$, namely 
$\varphi_{-2n+2,2}^{D_n}$, 
$\varphi_{-2n+4,2}^{D_n},\, \ldots\,,\varphi_{-6,2}^{D_n}$,
come, in fact, from the sublattice $nA_1=A_1\oplus\ldots\oplus A_1$ 
of $D_n$. We can easily construct them using only Jacobi modular forms
$\varphi_{-2,1}$ and $\varphi_{0,1}$ introduced by Eichler and Zagier (see \cite{EZ}). 

There are four  generators of index $1$, namely
$(\omega_{-n,1}^{D_n})^2$, 
$\varphi_{-4,1}^{D_n}$, $\varphi_{-2,1}^{D_n}$, $\varphi_{0,1}^{D_n}$.
The Jacobi form $\omega_{-n,1}^{D_n}$ has the simplest possible divisor.
It corresponds to the orbit of the miniscule weights of the root system 
$D_n$. So it is proportional to the products of odd Jacobi theta-series.
This form $\omega_{-n,1}^{D_n}$ defines the difference between two graded 
rings
$J_{*,*}^{O(D_n)}$ and $J_{*,*}^{W(D_n)}$ (see more details in \S 4).

We prove in \S 5 that the last three generators $\varphi_{-4,1}^{D_n}$,
$\varphi_{-2,1}^{D_n}$,  $\varphi_{0,1}^{D_n}$ 
satisfy a special system of modular differential equations. Moreover
we found a general modular differential equation for $\varphi_{0,1}^{D_n}$ 
(see (6) in \S 5).

We restrict ourselves in this paper only to the $D_n$ case with 
$n\le 8$ because of existence of the $D_8$-tower of reflective 
automorphic forms. Our arithmetic construction of the generators works 
with small modification for any $n$. We plan to study different variations 
of generators and the corresponding non-linear differential equations for 
arbitrary $n$ in a separate publication. 

\section{Jacobi modular forms and root systems: definitions and examples}

In this paper we use the following definition of weak Jacobi forms 
in many variables. See \cite{Gr94}, \cite{CG} and \cite{Gr18} for more details.

By a lattice we mean a free $\mathbb{Z}$-module equipped  with a
non-degenerate symmetric bilinear form $(\cdot , \cdot)$ with values
in $\mathbb{Z}$. A lattice is even if $(l,l)$ is even for all its elements.

\begin{definition}\label{Jacobi}
Let $L$ be the a positive definite even lattice with the inner product $
(\cdot\, , \cdot)$, a variable $\tau$ be from the upper half-plane $\mathcal{H}$ 
and let $\mathfrak{z}=(z_1,\ldots, z_n)\in L\otimes\mathbb{C}$. Then 
\textit{a weak Jacobi form of weight $k\in \Bbb Z$ and index $m\in \Bbb 
Z$ for the lattice $L$} is a holomorphic function 
$\varphi: 
\mathcal{H}\times(L\otimes\mathbb{C})\to\mathbb{C}$ which satisfies the 
functional equations 
$$
\begin{aligned}
\varphi\left(\frac{a\tau+b}{c\tau+d},\frac{\mathfrak{z}}{c\tau+d}
\right)&=(c\tau+d)^ke^{\pi im\frac{c(\mathfrak{z},\mathfrak{z})}{c\tau
+d}}\varphi(\tau,\mathfrak{z})\text{~~~for any~~} 
\left(\begin{smallmatrix}
a & b\\
c & d
\end{smallmatrix}\right)\in SL_2(\mathbb{Z}),\\
\varphi(\tau, \mathfrak{z}+\lambda\tau+\mu)
&=e^{-2\pi im(\lambda,\,\mathfrak{z})-\pi im(\lambda,\,\lambda)\tau}\varphi(\tau,\mathfrak{z}) \text{~~~for all~~} \lambda, \mu\in L
\end{aligned}
$$
and has a Fourier expansion 
$$
\varphi(\tau,\mathfrak{z})=
\sum_{n\geqslant 0,\, l\in L^{\vee}} a(n, l)q^n \zeta^l,
$$
where $q$ denotes $e^{2\pi i\tau}$ and $\zeta^l$ denotes $e^{2\pi i (\mathfrak{z},\, l)}$ for any $l \in L^{\vee}$, where $L^{\vee}$ is the dual lattice of $L$.
\end{definition}

\begin{remark}
1) If the Fourier expansion of $\varphi(\tau, \mathfrak{z})$ satisfies the stronger condition
$a(n, l)\neq 0 \Rightarrow 2mn\geqslant (l, l)$, 
then such form is called {\it holomorphic} Jacobi form. And if it satisfies even more strong condition
$a(n, l)\neq 0 \Rightarrow 2mn > (l, l)$,
then this form is called {\it cusp} Jacobi form.
We denote the respective finite dimensional spaces by
$$
J^{c}_{k,m}(L)\subset J_{k,m}(L)\subset J^{w}_{k,m}(L).
$$

2) In fact, these spaces depend only on the discriminant group 
of the renormalised lattice $L(m)$. The space of holomorphic Jacobi forms is isomorphic to the holomorphic vector-valued modular forms
defined by Weil representation of  $L(m)$ 
(see \cite[Lemma 2.3]{Gr94}).

3) It is easy to show that there are no non-zero Jacobi forms of 
negative index $m$.  For $m=0$ a weak Jacobi form of weight $k$ is a 
$SL_2(\Bbb Z)$-modular form $f(\tau)$ of weight $k$.
\end{remark}

\begin{definition}
Let $G<O(L)$ be a subgroup of the integral orthogonal group of a positive definite lattice $L$. A weak Jacobi form $\varphi(\tau, \mathfrak{z})$ for the lattice $L$ is called $G$-invariant, if for any $g\in G$
$$
\varphi(\tau, g(\mathfrak{z}))=\varphi(\tau, \mathfrak{z}).
$$
The finite-dimensional space of all $G$-invariant weak Jacobi forms of weight $k$ and index $m$ is denoted by $J_{k,m}^{w,G}(L)$; an analogous notation is used for the holomorphic and cusp forms.)
In the special case of the full integral orthogonal group $O(L)$ of lattice $L$ and its subgroup 
$W(L)$ we have
$$J_{k,m}^{w,O}(L) \subset J_{k,m}^{w,W}(L).$$
\end{definition}

The set of all weak $G$-invariant Jacobi forms has the structure of a bigraded ring
$$
J_{*,*}^{G}(L)=\bigoplus_{k\in \Bbb Z,\,m\in \Bbb Z_{\ge 0}}
J_{k,m}^{w,G}(L)
$$
This ring can be considered as an algebra over the graded ring of all holomorphic modular forms with respect to the full modular group
$$
M_*=\bigoplus_{k\ge 0} M_{2k}(SL_2(\Bbb Z))=\Bbb C[E_4(\tau), E_6(\tau)]
$$
which is the polynomial ring generated by two Eisenstein series
\begin{equation*}\label{Eis}
E_4(\tau)=1+240\sum_{n\ge 1}\sigma_3(n)q^n,\quad 
E_6(\tau)=1-504\sum_{n\ge 1}\sigma_5(n)q^n
\end{equation*}
where $\sigma_k(n)=\sum_{d\vert n}d^k$.

In this paper we consider the case $L=D_n$,  the root lattice of $D_n$ type. Following \cite{Bur}, let us recall the main prorepties of root 
lattices $D_n$.

Let $\varepsilon_1, \ldots, \varepsilon_n$ be the standard orthonormal basis in $\mathbb{Z}^n$. We can choose $\alpha_1=\varepsilon_1-\varepsilon_2,$ $\ldots$, $\alpha_{n-1}=\varepsilon_{n-1}-\varepsilon_n$ and $\alpha_n=\varepsilon_{n-1}+\varepsilon_n$ as the basis of root system $D_n$. These vectors generate even positive definite lattice  
$$
D_n = \{(x_1, \ldots, x_n) \in \mathbb{Z}^n\, |\,
\sum x_i \equiv 0 \mod{2}\},
$$ 
which is a sublattice of index 2 of $\mathbb{Z}^n$.
The root systems of $D_n$-type are well-defined for all $n\geqslant 2$. 
We note that the lattice $D_2$ is isomorphic to the direct sum 
$A_1\oplus A_1$ and $D_3$ is isomorphic to root lattice $A_3$. 
Recall that  $|D_{m}^\vee/D_{m}|=4$ and
$$
D_m^\vee/D_m=
\{0, e_1, \frac{1}2(e_1+\dots+e_{m-1}\pm e_m)\, \bmod D_m\}
$$
is the cyclic group of order $4$ generated by
$\frac{1}2(e_1+\dots+e_m)\bmod D_m$, if $m$ is odd,
and the product of two groups of order $2$, if $m$ is even.
We have the following  matrix
of inner products in the discriminant group of $D_m$
of the non-trivial classes modulo $D_m$
$$
\bigl((\mu_i,\mu_j)\bigr)_{i,j\ne 0}=
\left(\begin{matrix}
\frac{m}4&\frac{1}2&\frac{m-2}4\\
\frac{1}2&1&\frac{1}2\\
\frac{m-2}4&\frac{1}2&\frac{m}4
\end{matrix}\right),\qquad \mu_i\in D_m^\vee/D_m,
$$
where the diagonal elements are taken modulo $2\mathbb{Z}$
and the non-diagonal elements are taken modulo $\mathbb{Z}$.
We note that the discriminant group of $D_m$ depends only on $m\bmod 8$.

The Weyl group $W(D_n)$ acts on the elements of the lattice $D_n$ by 
permutations and changing of the sign of even number of coordinates. For 
all cases, except $n=4$, the Weyl group is the subgroup of index 2 of the 
full integral orthogonal group $O(D_n)$, which acts on coordinates by 
permutations and changing any number of signs. In case $n=4$, the integral 
orthogonal group is larger because there are additional transformations 
that correspond to rotations of Dynkin diagram. We denote by 
$O'(D_4)$ the subgroup of $O(D_4)$ that changes the sign of any number of 
coordinates and permutes them. 

In case of root system $D_n$ the set of coroots $D_n^{\vee}$ coincides 
with $D_n$. So, the highest coroot and the highest root are the same 
and are equal
$$
\alpha^{\vee} = \varepsilon_1 + \varepsilon_2 = \alpha_1+\alpha_{2} 
\quad\mbox{ for } n = 2,
$$
$$
\alpha^{\vee} = \varepsilon_1 + \varepsilon_2 = \alpha_1+\alpha_{2}+
\alpha_{3}\quad \mbox{ for } n = 3,
$$
$$
\alpha^{\vee} = \varepsilon_1 + \varepsilon_2 = \alpha_1+2\alpha_2+
\ldots+2\alpha_{n-2}+\alpha_{n-1}+\alpha_{n}\quad
 \mbox{ for } n\geqslant 4.
$$
The degrees of generators of the ring of $W(D_n)$-invariant polynomials are $2, 4, \ldots, 2n-2$ and  $n$.

Below we introduce some examples of Jacobi forms (see \cite{Gr94}, \cite{CG} for details).

\begin{example}\label{Unimod}
{\it Jacobi theta-series for unimodular lattices.}
For any  positive definite even unimodular lattice $L$ 
one can define the Jacobi theta function
\begin{equation*}\label{ThetaL}
\Theta_L(\tau, \mathfrak{z}) =\sum_{l \in L} q^{(l,l)} \zeta^l.
\end{equation*}
As it can be shown, this function is a $O(L)$-invariant Jacobi form of 
weight $\frac{\text{rk} L}{2}$ and index 1. 
So, for example, for the root lattices $D_8\subset E_{8}$ we have
$$
\Theta_{E_8}(\tau, \mathfrak{z})  
\in J_{4, 1}^{W}(E_8)\subset J_{4, 1}^{W}(D_8).
$$
\end{example}

\begin{example}\label{Discr}{\it The odd Jacobi theta-series and 
the theta-discriminant for $D_n$-lattice.}
The main function in our  constructions of Jacobi forms 
is a classical odd Jacobi theta function 
$\vartheta(\tau, z)=-i\vartheta_{11}(\tau, z)$ (see \cite{M} and
\cite{GN98})
$$
\vartheta(\tau, z) = q^{\frac{1}{8}} \sum_{n\in \mathbb{Z}} (-1)^n 
q^{\frac{n(n+1)}{2}} \zeta^{n+\frac{1}{2}} = 
$$
$$
=-q^{\frac{1}{8}}\zeta^{-\frac{1}{2}}
\prod_{n = 1}^{\infty} (1-q^{n-1} \zeta)(1-q^n\zeta^{-1})(1-q^n).
$$
It is well-known that this function satisfies the following relations:
\begin{gather*}
   \label{eq11}
   \vartheta(\tau,z+x\tau+y)=(-1)^{x+y}\exp(-\pi i(x^2\tau+2x z))
   \vartheta(\tau,z),\quad
   (x,y) \in \mathbb{Z}^2,
   \\
   \label{eq12}
\vartheta\left(\frac{a\tau+b}{c\tau+d},\frac{z}{c\tau +d}\right)
=v_\eta^3(A)(c\tau+d)^{1/2}
   \exp\biggl(\pi i\frac{cz^2}{c\tau+d}\biggr)\vartheta(\tau,z)
\end{gather*}
for any 
$A=\left(\begin{smallmatrix} a&b\\ c&d \end{smallmatrix}\right)\in\operatorname{SL}_2(\mathbb{Z})$,
where $v_\eta$~is the multiplier system of order~$24$ of the Dedekind
eta-function
$$
\eta(\tau)=q^{\frac{1}{24}}\prod_{n=1}^{\infty} (1-q^n).
$$
We note that
\begin{equation*}\label{G2}
\frac{\partial\vartheta(\tau ,z)}{\partial z}\big|_{z=0}
=2\pi i \,\eta(\tau )^3,\quad
\dfrac{\partial^2 \log \vartheta(\tau,z)}{\partial z^2}=
-\wp(\tau,z)+8\pi^2 G_2(\tau),
\end{equation*}
where $\wp(\tau,z)$ is the Weierstrass $\wp$-function and 
$G_2(\tau)=-\frac{1}{24}+\sum_{n\ge 1 }\sigma_1(n)q^n$
is the quasi-modular Eisenstein series.

There is a better way to write all these functional equations.
It is known that 
$$
\vartheta(\tau, z)\in J_{\frac 12, \frac 12}(v_\eta^3\times v_H)
$$
is a holomorphic Jacobi form of half-integral weight and index ($k=1/2$ and 
$m=1/2$) where $v_H$ is a non-trivial binary character of order $2$
of the Heisenberg group (see \cite{GN98}).

Using the Jacobi theta-series we construct the first basic weak Jacobi 
forms of negative weights for for the root systems $A_1$ and $D_n$: 
\begin{equation*}
\begin{aligned}
\varphi_{-2,1}(\tau, z)&=\frac{\vartheta^2(\tau, z)}{\eta^6(\tau)} 
=(\zeta -2 +\zeta^{-1}) +q\cdot(\ldots)
\in J_{-2,1}^{w, W}(A_1),\\
\omega^{D_n}_{-n,1}(\tau, \mathfrak{z})&= 
\frac{\vartheta(\tau, z_1)\cdot \ldots\cdot\vartheta(\tau, z_n)}
{\eta^{3n}(\tau)}\in J_{-n,1}^{w, W}(D_n)\ \  (n\ge 2).
\end{aligned}
\end{equation*}
We note that $\omega^{D_n}_{-n,1}$ is  invariant
with respect to $W(D_n)$ and anti-invariant
with respect to $\sigma\in O(D_n)\setminus W(D_n)$ for $n \neq 4$. 
For $n=4$ it is anti-invariant under the action of the subgroup 
$\sigma\in O'(D_4) \setminus W(D_4)$. 

For $n\equiv 0\mod 8$ we get a holomorphic Jacobi form of so-called
singular (the minimal possible) weight
\begin{equation*}\label{thetaD}
\Theta_{D_{8n}}(\tau, \mathfrak{z})=
\prod_{i=1}^{8n}\vartheta(\tau, z_i)\in J_{4n,1}(D_{8n}).
\end{equation*}
In particular, 
$J_{4n,1}(D_{8})=\Bbb C\langle{\Theta_{E_8}, \Theta_{D_{8}}}\rangle$ (see \cite{CG}).
\end{example}

\begin{example}\label{A1} {\it Hecke operators and construction of generators.} 
We mentioned above that  the Jacobi modular forms of Eichler--Zagier 
are the Jacobi forms for the root lattice $A_1$.
Then $\varphi(\tau, z)$ is $W(A_1)$-invariant if and only if
$\varphi(\tau, z)=\varphi(\tau, -z)$ is even in $z$, i.e.
$\varphi(\tau, z)$ has even weight.

The ring $J_{*,*}^{W}(A_1)=J^{w}_{2*,*}(A_1)$ 
is polynomial in two generators
over the ring of modular forms $M_*$ (see \cite{EZ} and Proposition 2.1 
below). The generators are the weak Jacobi forms $\varphi_{-2,1}$
and $\varphi_{0,1}$. 
There are many ways  to construct the second generator.
For example (see \cite{EZ}), 
one gets it with multiplication by Weierstrass $\wp(\tau,z)$-function:
$$
\varphi_{0,1}(\tau, z)=-\frac{3}{\pi^2}\wp(\tau, z)\varphi_{-2,1}(\tau, z) 
=(\zeta +10 +\zeta^{-1}) +q\cdot(\ldots).
$$
In \cite{GN98}, Hecke operators were used for construction of the same function.We shall apply this method also for $D_{n}$.

In general (see \cite{Gr94}), the action 
of Hecke operator on Jacobi forms of index $1$ for any lattice $L$ 
can be defined by 
\begin{equation*}\label{Hecke}
(\varphi_{k, 1}|{T_{-}(m)}) (\tau, \mathfrak{z})
=m^{-1}\sum_{\substack{ad = 
m \\ b \mod{d}}} a^k \varphi\left(\frac{a\tau+b}{d}, a
\mathfrak{z}\right).
\end{equation*}
This operator multiplies the index of the Jacobi form by $m$ and 
does not decrease the multiplicity of divisor in $z=0$. 
Direct computations give us that
$$
\varphi_{0,1}=
2\frac{\varphi_{-2,1} | T_{-}(2)}{\varphi_{-2,1}(\tau, z)}= 
\frac{1}{2^2}\cdot\frac{\varphi_{-2,1}(2\tau, 2{z})}{\varphi_{-2,1}(\tau, 
{z})} + \frac{\varphi_{-2,1}(\frac{\tau}{2},{z})}{\varphi_{-2,1}(\tau,{z})} 
+\frac{\varphi_{-2,1}(\frac{\tau+1}{2}, {z})}{\varphi_{-2,1}(\tau,{z})}.
$$
\end{example}

\begin{example}\label{Dif} {\it Modular differential operators and 
construction of generators.}
Another construction method makes use of the modular differential operator (see, for example,
\cite{GrJ}). More precisely, 
let us consider the operator $H_k$ that acts on Jacobi form $
\varphi_{k,m}(\tau, \mathfrak{z})$ of weight $k$ and index $m$ for the 
lattice $L$ of rank $n_0$ with the inner product $(\cdot\, , \cdot)$ by the 
formula
$$
H^{(L)}_k(\varphi_{k,m})(\tau, \mathfrak{z}) = $$
$$
=2\pi i \frac{\partial \varphi_{k,m}}{\partial \tau}(\tau, 
\mathfrak{z})+\frac{1}{8\pi^2 m}\left(\frac{\partial}{\partial 
\mathfrak{z}}, \frac{\partial}{\partial \mathfrak{z}}\right)
\varphi_{k,m}(\tau, \mathfrak{z}) + (2k-n_0)G_2(\tau)\varphi_{k,m}(\tau, 
\mathfrak{z})=$$
$$= \sum_{n=0}^{\infty}\sum_{l \in L^{\vee}} \left(n-\frac{1}{2m}(l, l)
\right) a(n, l) q^n \zeta^{l}+  (2k-n_0)G_2(\tau)\varphi_{k,m}(\tau, 
\mathfrak{z}),$$
where $G_2(\tau) = -\frac{1}{24}+q\cdot(\ldots)$ is the quasimodular 
Eisenstein series defined in \eqref{G2}.

In fact, this operator maps weak (holomorhic, cusp) Jacobi form 
of weight $k$ and index $m$ for lattice $L$ to weak (holomorhic, cusp) Jacobi form of weight $k+2$ and index $m$. 
So, in the case $L=A_1$ a direct calculation gives 
$$H_{-2}(\varphi_{-2,1})(\tau, z) = -\frac{1}{24}\varphi_{0,1}(\tau, z)=(\zeta+10+\zeta^{-1})+q\cdot(\ldots).$$
\end{example}

\begin{proposition}\label{wEZ}(see \cite[Theorem 9.3]{EZ})
The bigraded ring $J_{*,*}^{W}(A_1)$ is a polynomial ring
in two generators  over $M_{*}$
$$
J_{*,*}^{W}(A_1)=J_{2*,*}(A_1)=M_{*}[\varphi_{0,1}, \varphi_{-2,1}].
$$
\end{proposition}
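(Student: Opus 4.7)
The plan is to prove this by induction on the index $m$, using a ``division by $\varphi_{-2,1}$'' lemma. The preliminary equality $J^{W}_{*,*}(A_1) = J_{2*,*}(A_1)$ is immediate: the matrix $-I \in SL_2(\mathbb{Z})$ acts trivially on $\tau$ but sends $z \mapsto -z$, so the first functional equation in Definition 2.1 yields $\varphi(\tau, -z) = (-1)^k \varphi(\tau, z)$, making $W(A_1)$-invariance (evenness in $z$) equivalent to even weight. The base case $m = 0$ is Remark 2.1(3), which identifies $J^{w,W}_{2*, 0}(A_1)$ with $M_{2*}$.

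The key division lemma states: if $\psi \in J_{k,m}^{w,W}(A_1)$ with $m \geq 1$ satisfies $\psi(\tau, 0) = 0$, then $\psi = \varphi_{-2,1} \cdot \psi'$ for some $\psi' \in J_{k+2, m-1}^{w,W}(A_1)$. To prove it, recall $\varphi_{-2,1} = \vartheta(\tau, z)^2/\eta(\tau)^6$; the identity $\vartheta'(\tau, 0) = 2\pi i\, \eta^3(\tau)$ shows $\vartheta$ has a simple zero at $z = 0$, so $\varphi_{-2,1}$ has a zero of order exactly two there, and its full zero set in $z$ is the lattice orbit $\mathbb{Z}\tau + \mathbb{Z}$. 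Since $W(A_1)$-invariance forces $\psi$ to be even in $z$, its vanishing at $z = 0$ is of order at least two, matching the divisor of $\varphi_{-2,1}$. The common quasi-periodicity propagates the cancellation to all lattice translates, so $\psi/\varphi_{-2,1}$ is holomorphic on $\mathcal{H} \times \mathbb{C}$. Its transformation laws under $SL_2(\mathbb{Z})$ and $\mathbb{Z}\tau + \mathbb{Z}$ follow directly from those of $\psi$ and $\varphi_{-2,1}$, and the Fourier expansion preserves the weak condition $n \geq 0$ because $\varphi_{-2,1}$ has a nonzero $q^0$-term $(\zeta - 2 + \zeta^{-1})$, so division does not introduce negative $q$-powers.

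Given the lemma, the inductive step for $\varphi \in J_{2k,m}^{w,W}(A_1)$ runs as follows: let $f(\tau) := \varphi(\tau, 0) \in M_{2k}$ and note $\varphi_{0,1}(\tau, 0) = 12$. Then $\varphi - 12^{-m} f \cdot \varphi_{0,1}^m$ still lies in $J_{2k,m}^{w,W}(A_1)$ and vanishes at $z = 0$; by the lemma it equals $\varphi_{-2,1} \cdot \psi'$ for some $\psi' \in J_{2k+2, m-1}^{w,W}(A_1)$, and the induction hypothesis expresses $\psi'$ as an element of $M_*[\varphi_{0,1}, \varphi_{-2,1}]$, yielding the desired polynomial expression for $\varphi$. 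Uniqueness is built into the construction --- $f$ is canonically $\varphi(\tau, 0)$ and the remainder is determined inductively --- which is equivalent to the algebraic independence of $\varphi_{0,1}$ and $\varphi_{-2,1}$ over $M_*$. The main obstacle is the division lemma, where one must carefully verify both the exact matching of vanishing orders at $z = 0$ (crucially exploiting $W(A_1)$-invariance to upgrade a simple zero to a double zero) and the preservation of the weak Jacobi expansion condition under division by $\varphi_{-2,1}$.
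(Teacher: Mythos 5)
Your proposal is correct and follows essentially the same route as the paper: induction on the index $m$, subtracting $12^{-m}\varphi(\tau,0)\,\varphi_{0,1}^{m}$ to force a zero of even order at $z=0$, dividing by $\varphi_{-2,1}$, and obtaining algebraic independence from the specialisation at $z=0$. You merely supply the details (the divisor of $\vartheta^2/\eta^6$, preservation of the weak condition under division) that the paper's proof leaves implicit.
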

\begin{proof}We give a short proof which will be generalised below
to the case of $D_{n}$.

1) We note that $\varphi_{0,1}(\tau,0)=12$ and  $\varphi_{-2,1}(\tau,z)$
has zero of order $2$ in $z=0$.
To show that the generators are algebraically independent over 
$M_{*}$ one has to consider a homogeneous relation of fixed weight and index.  Then one has to consider the specialisation of a polynomial relation for $z=0$.

2) We note that $\varphi_{0,1}(\tau,0)=12$.
For any $\varphi\in J^{w}_{2k,m}$  we have 
$\varphi(\tau,0)=f_{2k}(\tau)\in M_{2k}$. Then 
$$
\psi_{k,m}(\tau,z)=\varphi(\tau,z)-\frac{1}{12^{m}}f_{2k}(\tau)\varphi_{0,1}^{m}(\tau,z)
$$
has zero of even order in $z=0$.  
Therefore  
$\frac{\psi_{k,m}(\tau,z)}{\varphi_{-2,1}(\tau,z)}\in J^w_{k+2,m-1}$
and we can finish the proof by induction in $m$.
\end{proof}

\begin{example}\label{2A1} {\it The case of the orthogonal sum $2A_{1}=A_{1}\oplus A_{1}$.}
The orthogonal group $O(2A_{1})$ is generated by transformations
$$
(z_1,z_2)\mapsto (z_2,z_1),\quad
(z_1,z_2)\mapsto (\pm z_1, \pm z_2).
$$
Therefore any 
$\varphi(\tau, z_1, z_2)\in J_{k,m}^{w, O}(2A_{1})$
is symmetric and even in $(z_1, z_2)$.
In particular, any $O(2A_{1})$-invariant Jacobi form has even weight.

For an orthogonal sum of two lattices one can take the direct symmetric products of Jacobi forms for both lattices:
\begin{equation*}\label{2A_1}
\begin{aligned}
\varphi_{0,1}^{2A_{1}}(\tau, z_{1}, z_{2})&=
\varphi_{0,1}(\tau, z_{1})\varphi_{0,1}(\tau, z_{2}),\\
\varphi_{-2,1}^{2A_{1}}(\tau, z_{1}, z_{2})&=
\varphi_{-2,1}(\tau, z_{1})\varphi_{0,1}(\tau, z_{2})+
\varphi_{0,1}(\tau, z_{1})\varphi_{-2,1}(\tau, z_{2}),\\
\varphi_{-4,1}^{2A_{1}}(\tau, z_{1}, z_{2})&=
\varphi_{-2,1}(\tau, z_{1})\varphi_{-2,1}(\tau, z_{2}).
\end{aligned}
\end{equation*}
\end{example}

\begin{proposition}\label{Prop2A_1}
The bigraded ring $J_{*,*}^{w, O}(2A_1)$ 
is a polynomial ring over $M_{*}$ with three  algebraically independent
generators
$$
J_{*,*}^{w, O}(2A_1)=M_{*}
[\varphi^{2A_{1}}_{0,1}, \varphi^{2A_{1}}_{-2,1}, 
\varphi^{2A_{1}}_{-4,1}].
$$
\end{proposition}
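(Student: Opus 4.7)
The plan is to mirror Proposition~\ref{wEZ}: specialising $z_2 = 0$ will reduce the two-variable case to the $A_1$ setting, and division by $\varphi^{2A_{1}}_{-4,1}$ will decrease the index by one. For algebraic independence, suppose $P\in M_*[X_0, X_{-2}, X_{-4}]$ is a nonzero polynomial with $P(\varphi^{2A_{1}}_{0,1}, \varphi^{2A_{1}}_{-2,1}, \varphi^{2A_{1}}_{-4,1}) = 0$. Since Jacobi forms lie in the integral domain of holomorphic functions on $\mathcal{H}\times\mathbb{C}^2$, I may factor out the largest power of $X_{-4}$ dividing $P$ and assume $X_{-4}\nmid P$. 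Then $Q(X_0, X_{-2}) := P(X_0, X_{-2}, 0)$ is a nonzero polynomial over $M_*$. Specialising $z_2 = 0$ kills $\varphi^{2A_{1}}_{-4,1}$ and sends the first two generators to $12\varphi_{0,1}$ and $12\varphi_{-2,1}$ respectively, yielding a nontrivial polynomial relation between $\varphi_{0,1}$ and $\varphi_{-2,1}$ over $M_*$, which contradicts Proposition~\ref{wEZ}.

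For generation I induct on the index $m$; the case $m = 0$ is trivial. Given $\varphi \in J^{w,O}_{k,m}(2A_1)$ with $m\geq 1$, its restriction $\varphi(\tau, z_1, 0)$ lies in $J^{w,W}_{k,m}(A_1)$, since $O(2A_1)$-invariance forces evenness in $z_1$. By Proposition~\ref{wEZ},
$$
\varphi(\tau, z_1, 0) = \sum_{i+j=m} f_{k+2j}(\tau)\,\varphi_{0,1}(\tau, z_1)^i\, \varphi_{-2,1}(\tau, z_1)^j
$$
for some $f_{k+2j} \in M_{k+2j}$. Since $\varphi^{2A_{1}}_{0,1}\big|_{z_2=0} = 12\,\varphi_{0,1}(\tau, z_1)$ and $\varphi^{2A_{1}}_{-2,1}\big|_{z_2=0} = 12\,\varphi_{-2,1}(\tau, z_1)$, the lifted form
$$
\Phi = 12^{-m}\sum_{i+j=m} f_{k+2j}\,(\varphi^{2A_{1}}_{0,1})^i (\varphi^{2A_{1}}_{-2,1})^j
$$
belongs to $M_*[\varphi^{2A_{1}}_{0,1}, \varphi^{2A_{1}}_{-2,1}]$ and has the same restriction as $\varphi$ at $z_2 = 0$. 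Therefore $\psi := \varphi - \Phi \in J^{w,O}_{k,m}(2A_1)$ vanishes on $\{z_2 = 0\}$; the swap $z_1\leftrightarrow z_2$ in $O(2A_1)$ then forces vanishing on $\{z_1 = 0\}$ as well, and evenness of $\psi$ in each variable upgrades these zeros to order at least $2$.

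Finally, $\varphi^{2A_{1}}_{-4,1}(\tau, z_1, z_2) = \varphi_{-2,1}(\tau,z_1)\,\varphi_{-2,1}(\tau,z_2)$ vanishes precisely on the translates $\{z_1 \in \mathbb{Z}\tau + \mathbb{Z}\} \cup \{z_2 \in \mathbb{Z}\tau + \mathbb{Z}\}$, each with multiplicity exactly $2$, because $\varphi_{-2,1} = \vartheta^2/\eta^6$ has a double zero on $\mathbb{Z}\tau + \mathbb{Z}$ and no other zeros. Jacobi quasi-periodicity propagates the vanishing of $\psi$ from the coordinate hyperplanes to all their lattice translates, so the quotient $\psi/\varphi^{2A_{1}}_{-4,1}$ is holomorphic on $\mathcal{H}\times\mathbb{C}^2$. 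It inherits the Jacobi transformation laws of weight $k+4$ and index $m-1$ and remains $O(2A_1)$-invariant, so induction on $m$ closes the argument. The main technical subtlety is this holomorphy step, which rests on the precise divisor structure of $\varphi_{-2,1}$ together with the quasi-periodicity propagation; the rest is bookkeeping parallel to Proposition~\ref{wEZ}.
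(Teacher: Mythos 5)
Your proposal is correct and follows essentially the same route as the paper: restrict to $z_2=0$, invoke Proposition~\ref{wEZ} to subtract off a lift built from $\varphi^{2A_1}_{0,1}$ and $\varphi^{2A_1}_{-2,1}$, use symmetry and evenness to get zeros of order at least two along both coordinate divisors, divide by $\varphi^{2A_1}_{-4,1}$, and induct on the index. You merely make explicit the details the paper leaves implicit (factoring out powers of $X_{-4}$ for algebraic independence, the divisor of $\vartheta^2/\eta^6$, and the quasi-periodicity propagation), all of which are correct.
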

\begin{proof}
We have 
$$
\varphi^{2A_{1}}_{0,1}(\tau,z_{1},0)=12\varphi_{0,1}(\tau,z_{1})
\quad{\rm and} 
\quad
\varphi^{2A_{1}}_{-2,1}(\tau,z_{1},0)=12\varphi_{-2,1}(\tau,z_{1}).
$$
The algebraic independence follows from this fact and 
Proposition \ref{wEZ}. 
For any $\varphi\in J_{2k,m}^{w, O}(2A_{1})$ we have 
$\varphi(\tau,z_{1},0)\in J^{w}_{2k,m}(A_1)$. 
Therefore according to Proposition \ref{wEZ} there exists 
$\psi\in M_{*}(\varphi^{2A_{1}}_{0,1}, \varphi^{2A_{1}}_{-2,1})$ 
of weight $2k$ and index $m$ such that $z_{2}=0$ is zero of even order 
of  $\varphi-\psi$. This Jacobi form is symmetric. Therefore
$(\varphi-\psi)/\varphi^{2A_{1}}_{-4,1}\in J_{2k+4,m-1}^{w, O}(2A_{1})$.
\end{proof}

\section{Constructing of generators for $D_8$}

\subsection{Generators of index 1}

\subsubsection{Weight $0$} 
{\it Hecke operators and two weak Jacobi forms of weight $0$ and index $1$.}
In Example \ref{Discr} we have defined two $W(D_n)$-invariant weak Jacobi 
forms with the simplest divisor $\{z_i=0\}$.
Therefore, in the case of the lattice $D_8$ we can construct two weak Jacobi forms 
of weight 0 using the Hecke operator like in Example \ref{A1}.
The first form is a reflective Jacobi form mentioned in the introduction
(see \cite{Gr18})
\begin{equation*}{\varphi}^{D_8}_{0,1}(\tau, \mathfrak{z}_8)
=-\frac{1}{2}
\frac{\Theta_{D_8}(\tau, \mathfrak{z}_8)| T_{-}(2)}
{\Theta_{D_8}(\tau, \mathfrak{z}_8)}
=8+\sum_{j=1}^8\zeta_j+\sum_{j=1}^8\zeta_j^{-1}+q\cdot(\ldots).
\end{equation*}
The weak Jacobi form of the same type $\omega^{D_8}_{-8,1}$ gives another 
weak Jacobi form of weight $0$
\begin{equation*}\label{psi01}
\psi^{D_8}_{0,1}(\tau, \mathfrak{z}_8)
=\frac{1}{2}\frac{\omega^{D_8}_{-8,1} | T_{-}(2)}{\omega^{D_8}_{-8,1}
(\tau, z)}=512+\sum \zeta_1^{\pm\frac{1}{2}}\ldots\zeta_8^{\pm\frac{1}{2}}
+q\cdot(\ldots).
\end{equation*}
Both Jacobi forms are $O(D_8)$-invariants.
\smallskip

\noindent
\subsubsection{Weight $-4$}
{\it $E_8$-lattice and a weak Jacobi form $\varphi^{D_8}_{-4,1}$ 
of weight $-4$.}
In Example \ref{Unimod} we defined  the Jacobi theta-series 
for an even unimodular lattice.  Note that the lattice 
$D_8$ is a sublattice of unimodular lattice
$E_8 = \langle D_8, \frac{\varepsilon_1+\ldots +\varepsilon_8}{2}\rangle$.
Let us take the unimodular lattice  
$D_{16}^+ 
=\langle D_{16},\frac{\varepsilon_1+\ldots +\varepsilon_{16}}{2}\rangle$. 
We define the Jacobi form of weight $8$ (see \eqref{Eis}) 
without constant term
$$
E_4(\tau)\cdot\Theta_{E_8}(\tau, \mathfrak{z}_8)-\Theta_{D_{16}^+}
\big|_{D_8}(\tau, \mathfrak{z}_8)\in J^{W}_{8, 1}(D_8).
$$
We can  find the $q^1$-term of its Fourier expansion using a description of the roots of these lattices. 
Any root in $E_8$ is equal to $\pm\varepsilon_i\pm\varepsilon_j$ for 
$i\neq j$ or $\frac{\pm \varepsilon_1\pm\ldots\pm\varepsilon_8}{2}$ with 
even number of $+$ and $-$. Hence $q^1$-term of 
$E_4(\tau)\cdot\Theta_{E_8}$
is equal to 
$$
240+\sum_{1\leqslant i< j\leqslant 8} \zeta_i^{\pm1}\zeta_j^{\pm1}+
\sum_{even\,\pm} \zeta_1^{\pm\frac{1}{2}}\ldots\zeta_8^{\pm\frac{1}{2}},
$$
where the second type summands contain even number of $+$ and $-$ in 
exponents. Any root $D_{16}^+$ is equal to 
$\pm \varepsilon_i\pm\varepsilon_j$ for $i\neq j$. Consequently, after 
restriction to $D_8$ the $q^1$-term of Fourier expansion equals
$$
\sum_{1\leqslant i< j\leqslant 8} 
\zeta_i^{\pm1}\zeta_j^{\pm1}+16\sum_{i=1}^8\zeta_i^{\pm 1}+112.
$$
As a result, we get 
\begin{multline*}
\tilde\varphi_{-4,1}(\tau,\mathfrak{z}_8)=\Delta(\tau)^{-1}\biggl(E_4(\tau)
\cdot\vartheta_{E_8}(\tau, \mathfrak{z}_8)-\vartheta_{D_{16}^+}\bigg|_{D_8}
(\tau, \mathfrak{z}_8)\biggr)=\\
128-16\sum_{i=1}^8\zeta_i^{\pm 1}+\sum_{even\,\pm} 
\zeta_1^{\pm\frac{1}{2}}\ldots\zeta_8^{\pm\frac{1}{2}}+q\cdot(\ldots)
\in J_{-4,1}^{w,W}(D_8).
\end{multline*}
By construction, this Jacobi form is $W(D_8)$-invariant. However, it is not 
$O(D_8)$-invariant because of the term 
$\sum_{even\,\pm} \zeta_1^{\pm\frac{1}{2}}\ldots\zeta_8^{\pm\frac{1}{2}}$. 
We can correct this function by the theta-discrimiant
$$
\omega^{D_8}_{-8,1}(\tau,\mathfrak{z}_8)=\sum_{even\,\pm} 
\zeta_1^{\pm\frac{1}{2}}\ldots\zeta_8^{\pm\frac{1}{2}}-\sum_{odd\,\pm} 
\zeta_1^{\pm\frac{1}{2}}\ldots\zeta_8^{\pm\frac{1}{2}} + q\cdot(\ldots),
$$
and obtain $O(D_8)$-invariant form. More precisely,
\begin{multline}
\varphi^{D_8}_{-4,1}(\tau,\mathfrak{z}_8)
=2\tilde{\varphi}_{-4,1}(\tau,\mathfrak{z}_8)
-E_4(\tau)\omega^{D_8}_{-8,1}(\tau,\mathfrak{z}_8)=\\
256-32\sum_{i=1}^8\zeta_i^{\pm 1}
+\sum \zeta_1^{\pm\frac{1}{2}}\ldots\zeta_8^{\pm\frac{1}{2}}+q\cdot(\ldots)
\in J_{-4,1}^{w,O}(D_8),
\end{multline}
where in the last type summand of $q^0$-term the sum is taken for all 
possible combinations of $+$ and $-$.
We note the following relation between the two weak Jacobi forms 
of weight $0$ and index $1$ defined above
$$
\psi^{D_8}_{0,1}-E_4\varphi^{D_8}_{-4,1}=
256+32\sum_{j=1}^8\zeta_j+32\sum_{j=1}^8\zeta_j^{-1}+q
\cdot(\ldots)=32{\varphi}^{D_8}_{0,1}.
$$
\noindent
\subsubsection{Weight $-2$} {\it Modular differential operator and 
$\varphi_{-2,1}^{D_8}$.}
To construct the generator of weight $-2$ and index $1$ 
we apply the modular differential operator from Example 
\ref{Dif}, to $\varphi^{D_8}_{-4,1}$. 
As a result, we get
\begin{equation*}
\varphi^{D_8}_{-2,1}=3\cdot H_{-4}(\varphi_{-4,1}^{D_8})
=512-16\sum_{j=1}^8\zeta_j^{\pm 1}
-\sum \zeta_1^{\pm\frac{1}{2}}\ldots
\zeta_8^{\pm\frac{1}{2}}+q\cdot(\ldots).
\end{equation*}
Applying the modular differential operator $H_{-2}$ to 
$\varphi_{-2,1}^{D_8}$ we obtain another formula for $\psi^{D_8}_{0,1}$
$$
\psi^{D_8}_{0,1}=2\cdot H_{-2}(\varphi_{-2,1}^{D_8})=512+
\sum \zeta_1^{\pm\frac{1}{2}}\ldots\zeta_8^{\pm\frac{1}{2}}+q(\ldots).
$$
\subsection{Generators of index 2 for $D_8$}

We note that by construction (see \S 2) of $D_n$  is a sublattice of index 2 of the lattice $\mathbb{Z}^n$.
Let $L(2)$ denote the lattice $L$ with double inner product, i.e. as sets $L(2)=L$, but for any two elements $l_1, l_2$ their inner product in $L(2)$ is equal to $2(l_1, l_2)$ instead of $(l_1, l_2)$. Then, by 
definition of the lattice $A_1$, $\mathbb{Z}(2)\simeq A_1$ and  
$$
D_n(2)< \mathbb{Z}(2)^{\oplus n}\simeq A_1^{\oplus n}.
$$

Therefore, by analogy with the case of Jacobi forms for $A_1\oplus A_1$ (see Example \ref{2A1}), for any $n$ and $0\le k\le n$ we can define $O(D_n)$-invariant (or $O'(D_4)$-invariant if $n=4$) weak Jacobi forms of weight $-2k$ and index $2$
\begin{multline*}
\varphi_{-2k, 2}^{D_n}(\tau, z_1,\ldots,z_n)= \frac{1}{k!(n-k)!}\sum_{\sigma\in S_n} \varphi_{-2, 1}(\tau, z_{\sigma(1)})\ldots\varphi_{-2, 1}(\tau, z_{\sigma(k)}) \times \\
\times\varphi_{0, 1}(\tau, z_{\sigma(k+1)})\ldots\varphi_{0, 1}(\tau, z_{\sigma(n)}),
\end{multline*}
where the summation is taken over all permutations and  $\varphi_{0,1}$ and $\varphi_{-2,1}$ are Eichler-Zagier's Jacobi forms for $A_1$, which were defined in Example \ref{A1}. Obviously, these forms are also $W(D_n)$-invariant.

Note again that $\varphi_{-2,1}(\tau, 0) = 0$ and $\varphi_{0,1}
(\tau, 0)=12$ (because $\varphi_{0,1}(\tau, 0)=12 + q\cdot(\ldots)$ 
is a modular form of weight zero by definition, but there are no non-constant  modular forms of weight 0). 
For $0\le k\le n-1$ we get
$$
\varphi_{-2k, 2}^{D_n}(\tau, z_1,\ldots,z_{n-1},0)=12\varphi_{-2k, 2}^{D_{n-1}}(\tau, z_1,\ldots,z_{n-1}).
$$ 
Therefore, the constructed Jacobi forms of index 2 form the natural tower 
with respect to the natural embedding $D_2<\dots<D_n$
(see \S 4 below).

\section{The arithmetic $D_8$-tower of weak Jacobi forms}

Now let us move on from the version of Wirthm\"uller's theorem in the case 
of root systems $A_1$ and $A_1\oplus A_1$ to the statement of the main 
results of our work.

\begin{theorem}\label{MainTheorem}
The bigraded ring of all weak Jacobi forms associated with root lattice $D_n$ for $n 
\leqslant 8$ and invariant under the action of the full integral 
orthogonal group $O(D_n)$ in the case $n\neq 4$ and $O'(D_4)$ in the case 
$n=4$ has the structure of the free algebra over the ring of modular forms 
with following generators:
$$J^{w, O}_{*,*}(D_2) = M_*[\varphi^{D_2}_{0, 1}, \varphi^{D_2}_{-2,1}, 
\varphi^{D_2}_{-4,1}],$$
$$J^{w, O}_{*,*}(D_3) = M_*[\varphi^{D_3}_{0, 1}, \varphi^{D_3}_{-2,1}, 
\varphi^{D_3}_{-4,1}, (\omega^{D_3}_{-3, 1})^2], $$
$$J^{w, O'}_{*,*}(D_4) = M_*[\varphi_{0, 1}^{D_4}, \varphi^{D_4}_{-2,1}, 
\varphi^{D_4}_{-4,1},\varphi^{D_4}_{-6,2}, (\omega^{D_4}_{-4, 1})^2],$$
and
$$J^{w, O}_{*,*}(D_n) = M_*[\varphi^{D_n}_{0, 1}, \varphi^{D_n}_{-2,1}, 
\varphi^{D_n}_{-4,1},\varphi^{D_n}_{-6,2}, \ldots, \varphi^{D_n}_{-2n+2, 
2}, (\omega^{D_n}_{-n, 1})^2]$$
for $5 \leqslant n \leqslant 8$.
Moreover, for all $n\neq 4$ all generators, except 
$\omega_{-n,1}^{D_n}$, are invariant under the action of the full integral 
orthogonal group (and the group $O'(D_4)$ in the case $n=4$), 
and there is the following natural tower with respect to restrictions from $D_n$ to $D_{n-1}$ by setting $z_n=0$
$$
\begin{scriptsize}
\begin{array}{ccccccccccc}
\varphi^{D_8}_{0, 1}, & \varphi^{D_8}_{-2,1}, & \varphi^{D_8}_{-4,1}, & 
\varphi^{D_8}_{-6,2}, & \varphi_{-8,2}^{D_8},  & \varphi_{-10,2}^{D_8}, & 
\varphi^{D_8}_{-12, 2}, & \varphi^{D_8}_{-14, 2}, & (\omega^{D_8}_{-8, 
1})^2\\
\downarrow & \downarrow & \downarrow & \downarrow & \downarrow & 
\downarrow & \downarrow & \downarrow & \downarrow \\
\varphi^{D_7}_{0, 1}, & \varphi_{-2,1}^{D_7}, & \varphi_{-4,1}^{D_7}, & 
\varphi_{-6,2}^{D_7}, & \varphi_{-8,2}^{D_7},  & \varphi_{-10,2}^{D_7}, & 
\varphi^{D_7}_{-12, 2}, & (\omega^{D_7}_{-7, 1})^2, & 0 \\
\downarrow & \downarrow & \downarrow & \downarrow & \downarrow & 
\downarrow & \downarrow & \downarrow &  &\\
\varphi^{D_6}_{0, 1}, & \varphi_{-2,1}^{D_6}, & \varphi_{-4,1}^{D_6}, & 
\varphi_{-6,2}^{D_6}, & \varphi_{-8,2}^{D_6}, & \varphi_{-10,2}^{D_6}, & 
(\omega^{D_6}_{-6, 1})^2, & 0 & &\\
\downarrow & \downarrow & \downarrow & \downarrow & \downarrow & 
\downarrow & &\\
\varphi^{D_5}_{0, 1}, & \varphi_{-2,1}^{D_5}, & \varphi_{-4,1}^{D_4}, & 
\varphi_{-6,2}^{D_5}, & \varphi_{-8,2}^{D_5}, & (\omega^{D_5}_{-5, 1})^2, 
& 0 & &\\
\downarrow & \downarrow & \downarrow & \downarrow & \downarrow & 
\downarrow & &  &\\
\varphi^{D_{4}}_{0, 1}, & \varphi_{-2,1}^{D_{4}}, & \varphi_{-4,1}
^{D_{4}}, & \varphi_{-6,2}^{D_{4}}, & (\omega^{D_{4}}_{-4, 1})^2, & 0 & & 
& \\
\downarrow & \downarrow & \downarrow & \downarrow & \downarrow & & &  &\\
\varphi^{D_{3}}_{0, 1}, & \varphi_{-2,1}^{D_{3}}, & \varphi_{-4,1}
^{D_{3}}, & (\omega_{-3,1}^{D_{4}})^2, & 0 &  & & & \\
\downarrow & \downarrow & \downarrow & \downarrow &  &  & &  &\\
\varphi^{D_{2}}_{0, 1}, & \varphi_{-2,1}^{D_{2}}, & \varphi_{-4,1}
^{D_{2}}, & 0 &  &  & & &\\
\end{array}
\end{scriptsize}
$$
where 
$$
\varphi_{-2k,1}^{D_n}\big|_{{z_n}=0}=\varphi_{-2k,1}^{D_{n-1}},\quad
\varphi_{-2k,2}^{D_n}\big|_{{z_n}=0}=12\varphi_{-2k,2}^{D_{n-1}},
\quad $$
$$
\varphi_{-2(n-1),2}^{D_n}=12(\omega^{D_{n-1}}_{-(n-1), 1})^2.
$$
\end{theorem}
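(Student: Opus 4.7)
The plan is to prove the theorem by induction on the rank $n$, with base case $n=2$ supplied by Proposition \ref{Prop2A_1} via the lattice identification $D_2\cong A_1\oplus A_1$. Within the inductive step I would run a secondary induction on the index $m$, exploiting at each stage the restriction map $\varphi\mapsto\varphi|_{z_n=0}$ and the tower relations listed at the end of the theorem statement.

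For algebraic independence of the listed generators over $M_*$, I would organise any hypothetical polynomial relation $P$ as a polynomial in the variable $(\omega^{D_n}_{-n,1})^2$ with coefficients in the remaining $n$ generators. Specialisation $z_n=0$ sends $(\omega^{D_n}_{-n,1})^2$ to $0$ (since $\vartheta(\tau,0)=0$) and sends each other $D_n$-generator to a $D_{n-1}$-generator, up to scalars of $12$; in particular $\varphi^{D_n}_{-2n+2,2}|_{z_n=0}=12(\omega^{D_{n-1}}_{-(n-1),1})^2$, which is exactly the anti-invariant generator at the previous level. By the outer inductive algebraic independence over $D_{n-1}$, the constant coefficient of $P$ in $(\omega^{D_n}_{-n,1})^2$ must vanish; factoring out and iterating forces $P=0$.

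For generation, given $\varphi\in J^{w,O}_{2k,m}(D_n)$ (with $O$ replaced by $O'$ when $n=4$), I restrict to $z_n=0$ and apply the outer induction to write $\varphi|_{z_n=0}$ as a polynomial in the $D_{n-1}$-generators. Using the tower relations I lift this to a polynomial $\tilde P$ in the $D_n$-generators with $\tilde P|_{z_n=0}=\varphi|_{z_n=0}$. Then $\varphi-\tilde P$ vanishes at $z_n=0$ and, by $O(D_n)$-invariance together with the $W(D_n)$-action permuting coordinates, at every $z_i=0$. Because $\omega^{D_n}_{-n,1}=\prod_i\vartheta(\tau,z_i)/\eta^{3n}$ has simple zeros along each such divisor (and its Heisenberg translates, which are forced by the Jacobi functional equations), I obtain a factorisation $\varphi-\tilde P=\omega^{D_n}_{-n,1}\cdot g$ in the ring of weak Jacobi forms. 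Now the key observation: since $\omega^{D_n}_{-n,1}$ is anti-invariant under any element $\sigma\in O(D_n)\setminus W(D_n)$ (respectively $O'(D_4)\setminus W(D_4)$) while $\varphi-\tilde P$ is invariant, $g$ is anti-invariant. Taking $\sigma$ to be a single sign change $z_i\mapsto-z_i$ forces $g|_{z_i=0}=0$, so a second factor of $\omega^{D_n}_{-n,1}$ divides $g$, and the final quotient $h=(\varphi-\tilde P)/(\omega^{D_n}_{-n,1})^2$ is an $O(D_n)$-invariant weak Jacobi form of index $m-2$, to which the inner induction on $m$ applies.

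The base of the inner induction is $m\le 1$: for $m=0$ the statement reduces to $J^{w,O}_{2k,0}(D_n)=M_{2k}$, and for $m=1$ the generator $(\omega^{D_n}_{-n,1})^2$ of index $2$ does not yet appear; in the latter case any $O$-invariant form of index $1$ vanishing on every $z_i=0$ would equal $\omega^{D_n}_{-n,1}$ times a modular form in $\tau$, and the anti-invariance of $\omega^{D_n}_{-n,1}$ forces that modular form to be zero, so $\varphi=\tilde P$. I expect the main technical obstacle to be the rigorous justification of the double-divisibility step: upgrading the pointwise vanishing of $\varphi-\tilde P$ along each hyperplane $z_i=0$ to a factorisation by $(\omega^{D_n}_{-n,1})^2$ in the ring of \emph{weak} Jacobi forms without introducing spurious poles at infinity or character mismatches, and handling $n=4$ consistently with $O'(D_4)$ in place of $O(D_n)$.
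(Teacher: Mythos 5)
Your proposal is correct and follows essentially the same route as the paper: an induction on the rank $n$ (base case $D_2\cong A_1\oplus A_1$), restriction to $z_n=0$ combined with the tower relations to lift and subtract, division of the resulting form vanishing on all $\{z_i=0\}$ by $(\omega^{D_n}_{-n,1})^2$ to lower the index, and a specialisation argument for algebraic independence (the paper phrases it via a minimal-degree relation, you via the constant coefficient in $(\omega^{D_n}_{-n,1})^2$, which is the same leading-term argument). Your two-step divisibility via the anti-invariance of $\omega^{D_n}_{-n,1}$ under a single sign change is just a more explicit justification of the paper's assertion that the order of vanishing along each $z_i=0$ is even.
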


%\begin{corollary}\label{WeylCn}
%The set of all weak $W$-invariant Jacobi forms associated with root 
%lattice $C_n$ for $2 \leqslant n \leqslant 8$
%has the structure of the free algebra over the ring of modular forms with 
%generators as in Theorem \ref{MainTheorem}. 
%\end{corollary}

\begin{corollary}\label{WeylDn}
The bigraded ring of all weak $W$-invariant Jacobi forms associated with root 
lattice $D_n$ for $3 \leqslant n \leqslant 8$
has the structure of the free algebra over the ring of modular forms with 
following generators:
$$J^{w, W}_{*,*}(D_3) = M_*[\varphi^{D_3}_{0, 1}, \varphi^{D_3}_{-2,1}, 
\varphi^{D_3}_{-4,1}, \omega^{D_3}_{-3, 1}], $$
$$J^{w, W}_{*,*}(D_4) = M_*[\varphi_{0, 1}^{D_4}, \varphi^{D_4}_{-2,1}, 
\varphi^{D_4}_{-4,1},\varphi^{D_4}_{-6,2}, \omega^{D_4}_{-4, 1}],$$
and
$$J^{w, W}_{*,*}(D_n) = M_*[\varphi^{D_n}_{0, 1}, \varphi^{D_n}_{-2,1}, 
\varphi^{D_n}_{-4,1},\varphi^{D_n}_{-6,2}, \ldots, \varphi^{D_n}_{-2n+2, 
2}, \omega^{D_n}_{-n, 1}]$$
for $5 \leqslant n \leqslant 8$.
Restrictions of these forms to $D_2$ by setting to 0 all coordinates, except 
$z_1$ and $z_2$, give generators of $J^{w, O}_{*,*}(D_2)$ up to 
multiplication by non-zero constants. 
\end{corollary}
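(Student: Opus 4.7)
The plan is to exploit the short exact sequence $1\to W(D_n)\to O(D_n)\to\mathbb{Z}/2\to 1$ (with $O$ replaced by $O'$ when $n=4$). Pick a generator $\sigma$ of the quotient, for instance the sign change $\sigma\colon(z_1,\ldots,z_n)\mapsto(z_1,\ldots,z_{n-1},-z_n)$. Any $\varphi\in J^{w,W}_{*,*}(D_n)$ decomposes as
$$
\varphi=\varphi^{+}+\varphi^{-},\qquad \varphi^{\pm}=\tfrac{1}{2}(\varphi\pm\sigma^{*}\varphi),
$$
where $\varphi^{+}$ is $\sigma$-symmetric, hence $O(D_n)$-invariant (resp.\ $O'(D_4)$-invariant), while $\varphi^{-}$ is $\sigma$-anti-symmetric, sharing this property with $\omega^{D_n}_{-n,1}$ itself. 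So the corollary reduces to the eigenspace decomposition
$$
J^{w,W}_{*,*}(D_n)=J^{w,O}_{*,*}(D_n)\oplus \omega^{D_n}_{-n,1}\cdot J^{w,O}_{*,*}(D_n).
$$

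The crucial step is divisibility of $\varphi^{-}$ by $\omega^{D_n}_{-n,1}$ in the ring of weak Jacobi forms. Since $\varphi^{-}$ is $\sigma$-anti-symmetric, it vanishes along $\{z_n=0\}$; combined with the permutation part of $W(D_n)$, this forces $\varphi^{-}$ to vanish along every hyperplane $\{z_i=0\}$, $1\le i\le n$. The form $\omega^{D_n}_{-n,1}=\vartheta(\tau,z_1)\cdots\vartheta(\tau,z_n)/\eta^{3n}(\tau)$ has exactly simple zeros along these hyperplanes, the lattice translations being absorbed by the Jacobi transformation law. Consequently the quotient $\psi:=\varphi^{-}/\omega^{D_n}_{-n,1}$ extends to a holomorphic weak Jacobi form, and being the ratio of two $W$-invariant, $\sigma$-anti-symmetric forms it is $O(D_n)$-invariant (or $O'(D_4)$-invariant when $n=4$).

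Granted this decomposition, Theorem \ref{MainTheorem} applied separately to $\varphi^{+}$ and $\psi$ writes every $\varphi\in J^{w,W}_{*,*}(D_n)$ as
$$
\varphi=P\bigl(X_1,\ldots,X_N,(\omega^{D_n}_{-n,1})^{2}\bigr)+\omega^{D_n}_{-n,1}\cdot Q\bigl(X_1,\ldots,X_N,(\omega^{D_n}_{-n,1})^{2}\bigr),
$$
where $X_1,\ldots,X_N$ is the list of $\varphi$-type generators from Theorem \ref{MainTheorem}. Treating $\omega^{D_n}_{-n,1}$ as a new formal indeterminate, this is a completely general polynomial in $M_{*}[X_1,\ldots,X_N,\omega^{D_n}_{-n,1}]$, proving surjectivity. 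Algebraic independence is obtained by splitting any hypothetical relation into its even and odd parts in $\omega^{D_n}_{-n,1}$: each part lies in one summand of the decomposition and yields a relation in $J^{w,O}_{*,*}(D_n)$, contradicting Theorem \ref{MainTheorem}.

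Finally, the restriction claim follows at once from the tower in Theorem \ref{MainTheorem}: the three index-one $\varphi$-type generators descend step by step with non-zero constant factors to non-zero multiples of $\varphi^{D_2}_{0,1},\varphi^{D_2}_{-2,1},\varphi^{D_2}_{-4,1}$, while $\omega^{D_n}_{-n,1}$ vanishes identically under the first restriction thanks to the factor $\vartheta(\tau,0)=0$. The main technical point is the divisibility step above: one must verify that the zero of $\varphi^{-}$ along each hyperplane $\{z_i=0\}$ has multiplicity at least that of the zero of $\omega^{D_n}_{-n,1}$, but since the latter is simple, mere vanishing of $\varphi^{-}$ suffices, and this is precisely what $\sigma$-anti-symmetry together with permutation invariance delivers.
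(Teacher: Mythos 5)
Your proof is correct, but it is not the route the paper takes. The paper disposes of the corollary in one line: rerun the induction of Theorem \ref{MainTheorem} verbatim with $O(D_n)$ replaced by $W(D_n)$ — in the division step one may now extract odd powers of $\omega^{D_{n+1}}_{-(n+1),1}$, since only $W$-invariance of the quotient is needed, and everything else (restriction to $z_{n+1}=0$, subtraction of a polynomial in the lower-rank generators, the tower property) goes through unchanged. You instead deduce the corollary as a formal consequence of the already-proved theorem, via the $\mathbb{Z}/2$-eigenspace decomposition under $\sigma\in O(D_n)\setminus W(D_n)$ and the key divisibility lemma: an anti-invariant $W$-invariant form vanishes on all hyperplanes $\{z_i=0\}$ and hence is divisible by $\omega^{D_n}_{-n,1}$, whose zeros there are simple. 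This yields the module decomposition $J^{w,W}_{*,*}(D_n)=J^{w,O}_{*,*}(D_n)\oplus\omega^{D_n}_{-n,1}\cdot J^{w,O}_{*,*}(D_n)$, and polynomiality follows because $(\omega^{D_n}_{-n,1})^2$ is one of the \emph{free} generators of the $O$-invariant ring for $3\le n\le 8$. Your approach buys a cleaner structural statement (the $W$-invariant ring as a free rank-two module over the $O$-invariant one — exactly the ``quadratic extension'' phenomenon the paper observes for $D_2$, where the argument correctly fails to give polynomiality because $(\omega^{D_2}_{-2,1})^2$ is not an independent generator), at the cost of the divisibility lemma; the paper's route avoids that lemma but requires re-examining the whole induction. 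Both the independence argument (splitting a relation into even and odd parts in $\omega$ and using that the ring of Jacobi forms is a domain) and the restriction claim are handled adequately.
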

\begin{proof}
The proof of this Corollary is the same as the proof of the Theorem \ref{MainTheorem} presented below. We only need to replace the orthogonal groups by Weyl groups.
\end{proof}

%\section{Proofs of the main theorems}
We prove  Theorem \ref{MainTheorem} by induction. 
First of all, we need to consider 
the case $D_2$. In fact, it was made in Proposition 2.2.
Let us note that by definition 
the lattice $D_2$ is equal to the set 
$$\{(x_1, x_2) \in \mathbb{Z} \,|\, x_1+x_2 \equiv 0 \mod{2}\}.$$
The lattice $A_1 \oplus A_1$ is embedded in $D_2$ as the sum of the 
lattices spanned on $(1,1)$ and $(1, -1)$. Actually, $D_2 \simeq A_1 
\oplus A_1$, and in terms described above this isomorphism is given by $
(x_1, x_2) \mapsto (\frac{x_1+x_2}{2}, \frac{x_1-x_2}{2})$. So, in terms 
of of ``weight" coordinates we have isomorphism $D_2 \otimes \mathbb{C}$
($z$-coordinates) and $(A_1 \oplus A_1)\otimes \mathbb{C}$ 
($w$-coordinates) by
$
(z_1, z_2) \leftrightarrow (w_1 + w_2, w_1 - w_2)
$.
We know from Proposition \ref{Prop2A_1},
$$
J_{*,*}^{w, O}(2A_1)=M_{*}
[\varphi^{2A_{1}}_{0,1}, \varphi^{2A_{1}}_{-2,1}, 
\varphi^{2A_{1}}_{-4,1}],
$$
where in $w$-coordinates
$$
\begin{aligned}
\varphi_{-4,1}^{2A_{1}}(\tau, w_{1}, w_{2})&=
\varphi_{-2,1}(\tau, w_{1})\varphi_{-2,1}(\tau, w_{2}),\\
\varphi_{-2,1}^{2A_{1}}(\tau, w_{1}, w_{2})&=
\varphi_{-2,1}(\tau, w_{1})\varphi_{0,1}(\tau, w_{2})+
\varphi_{0,1}(\tau, w_{1})\varphi_{-2,1}(\tau, w_{2}),\\
\varphi_{0,1}^{2A_{1}}(\tau, w_{1}, w_{2})&=
\varphi_{0,1}(\tau, w_{1})\varphi_{0,1}(\tau, w_{2}).
\end{aligned}
$$
Using the inverse transform of coordinates
$
(w_1, w_2) \leftrightarrow (\tfrac{z_1 + z_2}{2}, \tfrac{z_1 - z_2}{2})
$
we obtain
$$\varphi^{D_2}_{-4,1}(\tau, z_{1}, z_{2})=(\zeta_1^{\frac{1}{2}}\zeta_2^{\frac{1}{2}}-2+
\zeta_1^{-\frac{1}{2}}\zeta_2^{-\frac{1}{2}})(\zeta_1^{\frac{1}{2}}
\zeta_2^{-\frac{1}{2}}-2+\zeta_1^{-\frac{1}{2}}\zeta_2^{\frac{1}{2}})
+q(\ldots)=$$
$$=4+\sum_{j=1}^2(\zeta_j+\zeta_j^{-1})+2\sum \zeta_1^{\pm\frac{1}{2}}
\zeta_2^{\pm\frac{1}{2}}+q\cdot(\ldots)=-\frac{1}{32}\varphi_{-4,1}
^{D_8}\big|_{D_2}(\tau, z_{1}, \ldots, z_{8}),$$
$$\varphi^{D_2}_{-2,1}(\tau, z_{1}, z_{2})=(\zeta_1^{\frac{1}{2}}\zeta_2^{\frac{1}{2}}-2+
\zeta_1^{-\frac{1}{2}}\zeta_2^{-\frac{1}{2}})(\zeta_1^{\frac{1}{2}}
\zeta_2^{-\frac{1}{2}}+10+\zeta_1^{-\frac{1}{2}}\zeta_2^{\frac{1}{2}})+
$$
$$
+(\zeta_1^{\frac{1}{2}}\zeta_2^{\frac{1}{2}}+10+\zeta_1^{-\frac{1}{2}}
\zeta_2^{-\frac{1}{2}})(\zeta_1^{\frac{1}{2}}\zeta_2^{-\frac{1}{2}}-2+
\zeta_1^{-\frac{1}{2}}\zeta_2^{\frac{1}{2}})+q\cdot(\ldots)=
$$
$$
=-40+2\sum_{j=1}^2(\zeta_j+\zeta_j^{-1})-8\sum \zeta_1^{\pm\frac{1}{2}}
\zeta_2^{\pm\frac{1}{2}}+q(\ldots)=-\frac{1}{8}\varphi_{-2,1}^{D_8}\big|
_{D_2}(\tau, z_{1}, \ldots, z_{8}),
$$
$$
\hat{\varphi}^{D_2}_{0,1}(\tau, z_{1}, z_{2})=(\zeta_1^{\frac{1}{2}}\zeta_2^{\frac{1}{2}}
+10+\zeta_1^{-\frac{1}{2}}\zeta_2^{-\frac{1}{2}})(\zeta_1^{\frac{1}{2}}
\zeta_2^{-\frac{1}{2}}+10+\zeta_1^{-\frac{1}{2}}\zeta_2^{\frac{1}{2}})
+q(\ldots)=
$$
$$
=100+\sum_{j=1}^2(\zeta_j+\zeta_j^{-1})+10\sum \zeta_1^{\pm\frac{1}{2}}
\zeta_2^{\pm\frac{1}{2}}+q\cdot(\ldots).
$$
The last Jacobi form is not the restriction of the basic form 
$\varphi^{D_8}_{0, 1}$.  If we consider 
\begin{equation}\label{D2gen}
\varphi^{D_2}_{0,1}(\tau, z_{1}, z_{2}) = \hat{\varphi}^{D_2}_{0,1}(\tau, z_{1}, z_{2})+5E_4(\tau)\varphi^{D_2}
_{-4,1}(\tau, z_{1}, z_{2}),
\end{equation}
then
$\varphi^{D_2}_{0,1}=6\varphi_{0,1}^{D_8}\big|_{D_2}(\tau, z_{1}, \ldots, z_{8}).
$
%The replacing $\varphi^{D_2}_{0,1}$ with $\hat{\varphi}^{D_2}
%_{0,1}+5E_4\varphi^{D_2}_{-4,1}$ is linear and non-trivial over the ring 
%of modular forms, so forms $\varphi^{D_2}_{0,1}$, $\varphi^{D_2}_{-2,1}$ 
%and $\varphi^{D_2}_{0,1}$ are also generators.
Therefore, we get 
$$
J^{w, O}_{*,*}(D_2) = M_*[\varphi^{D_2}_{0, 1}, \varphi^{D_2}_{-2,1}, 
\varphi^{D_2}_{-4,1}].
$$

\begin{remark} {\it The bigraded ring of the $W(D_2)$-invariant weak Jacobi forms is not polynomial}. It is a quadratic extension of the  
$O(D_2)$-invariant quadratic ring.
The Jacobi form 
$\varphi_{-2,1}(\tau,z_1)\cdot\varphi_{0,1}(\tau, z_2)-
\varphi_{0,1}(\tau, z_1)\cdot\varphi_{-2,1}(\tau, z_2)$
is not  invariant under the action of  the whole $O(A_1 \oplus A_1)$.
Writing it in $w$-coordinates we get 
$$
\varphi_{-2,1}(\tau, w_1)\cdot\varphi_{0,1}(\tau, w_2)-
\varphi_{0,1}(\tau, w_1)\cdot\varphi_{-2,1}(\tau, w_2)=$$
$$
=12(\zeta_1^{\frac{1}{2}}\zeta_2^{\frac{1}{2}} - \zeta_1^{-\frac{1}{2}}
\zeta_2^{\frac{1}{2}}-\zeta_1^{\frac{1}{2}}\zeta_2^{-\frac{1}{2}}
+\zeta_1^{-\frac{1}{2}}\zeta_2^{-\frac{1}{2}})+q\cdot(\ldots)=12\omega_{-2,1}^{D_2}(\tau, w_1,w_2).
$$
%However,
%$$(\varphi_{-2,1}(\tau, w_1)\cdot\varphi_{0,1}(\tau, w_2)-
%\varphi_{0,1}(\tau, w_1)\cdot\varphi_{-2,1}(\tau, w_2))^2 = $$
%$$=(\varphi_{-2,1}(\tau, w_1)\cdot\varphi_{0,1}(\tau, w_2)+
%\varphi_{0,1}(\tau, w_1)\cdot\varphi_{-2,1}(\tau, w_2))^2-$$
%$$-4\varphi_{-2,1}(\tau, w_1)\cdot\varphi_{0,1}(\tau, 
%w_2)\cdot\varphi_{0,1}(\tau, w_1)\cdot\varphi_{-2,1}(\tau, 
%w_2)=$$
%$$=(\varphi_{-2,1}^{A_1 \oplus A_1}(\tau, w_1, 
%w_2))^2-4\varphi_{0,1}%^{A_1 
%$\oplus A_1}(\tau, w_1, w_2)\varphi_{-4,1}^{A_1 \oplus A_1}(\tau, w_1, 
%w_2).$$
Then we have the following relation:
$$
(\omega_{-2,1}^{D_2})^2=(\varphi_{-2,1}^{D_2})^2-4\varphi_{0,1}^{D_2}\varphi_{-4,1}^{D_2}+5E_4(\varphi_{-4,1}^{D_2})^2.
$$
\end{remark}

%\begin{remark}\label{AlgIndepWeyl}
%The same reasons prove that for $3 \leqslant n \leqslant 8$ constructed 
%$W(D_n)$-invariant forms are also algebraically independent.
%\end{remark}

\begin{lemma}\label{Generability}
For each $3 \leqslant n \leqslant 8$ the $O(D_n)$-invariant (or $O'(D_4)$-invariant in the case of $n=4$) weak Jacobi forms
in Theorem \ref{MainTheorem} generate the algebra $J_{*, *}^{w, O}(D_n)$ 
over the ring of modular forms.
\end{lemma}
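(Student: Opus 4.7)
The plan is to prove the lemma by a double induction: an outer induction on $n$, starting from the case $n=2$ established via Proposition~\ref{Prop2A_1}, and an inner strong induction on the index $m$ with base cases $m=0$ and $m=1$.

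Fix $n$ with $3 \leqslant n \leqslant 8$ and assume the result holds for $D_{n-1}$. Take $\varphi \in J_{k,m}^{w,O}(D_n)$ (with $O$ replaced by $O'$ when $n=4$) and restrict it to the hyperplane $z_n=0$. The restriction lies in $J_{k,m}^{w,O}(D_{n-1})$, and by the outer inductive hypothesis it is a polynomial over $M_*$ in the $D_{n-1}$-generators listed in Theorem~\ref{MainTheorem}. Every such $D_{n-1}$-generator lifts canonically to a $D_n$-generator via the tower relations stated at the end of Theorem~\ref{MainTheorem}: $\varphi_{-2k,1}^{D_{n-1}}$ lifts to $\varphi_{-2k,1}^{D_n}$, each $\varphi_{-2k,2}^{D_{n-1}}$ lifts to $\tfrac{1}{12}\varphi_{-2k,2}^{D_n}$, and the ``top'' form $(\omega_{-(n-1),1}^{D_{n-1}})^2$ lifts to $\tfrac{1}{12}\varphi_{-2(n-1),2}^{D_n}$. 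Substituting these lifts produces a polynomial $P$ in the $D_n$-generators with $P|_{z_n=0} = \varphi|_{z_n=0}$.

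Set $\psi := \varphi - P \in J_{k,m}^{w,O}(D_n)$. By construction $\psi$ vanishes on $\{z_n=0\}$, and permutation invariance in $O(D_n)$ (respectively $O'(D_4)$) propagates this to every coordinate hyperplane $\{z_i=0\}$. The sign change $z_i \mapsto -z_i$ lies in the group, so $\psi$ is even in each $z_i$ and its vanishing order along $\{z_i=0\}$ is at least $2$. Hence $\prod_{i=1}^n \vartheta(\tau,z_i)^2$ divides $\psi$ holomorphically, and since $(\omega_{-n,1}^{D_n})^2 = \eta(\tau)^{-6n} \prod_i \vartheta(\tau,z_i)^2$, the quotient $\psi/(\omega_{-n,1}^{D_n})^2$ is a holomorphic function on $\mathcal{H} \times (D_n \otimes \mathbb{C})$. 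Checking Jacobi transformation laws places it in $J_{k+2n,m-2}^{w,O}(D_n)$; full orthogonal invariance is preserved because $(\omega_{-n,1}^{D_n})^2$ is $O(D_n)$-invariant, the anti-invariance of $\omega_{-n,1}^{D_n}$ itself being killed by squaring.

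The inner induction then closes. For $m=0$ the form is a scalar modular form and already lies in $M_*$. For $m=1$ the putative quotient $\psi/(\omega_{-n,1}^{D_n})^2$ would have index $-1$, so it must vanish because there are no nonzero weak Jacobi forms of negative index, forcing $\varphi = P$. For $m \geqslant 2$ the quotient has smaller index $m-2$ and is a polynomial $Q$ in the listed generators by inductive hypothesis, giving $\varphi = P + (\omega_{-n,1}^{D_n})^2 Q$. The step I expect to be delicate is the simultaneous divisibility claim: deducing from vanishing to order $\geqslant 2$ along each of the $n$ different coordinate hyperplanes that $\psi$ is divisible by the whole \emph{product} $\prod_i \vartheta(\tau,z_i)^2$, and confirming that the resulting quotient has a Fourier expansion supported on non-negative powers of $q$ so as to be a genuine \emph{weak} Jacobi form. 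Verifying this cleanly and uniformly in $n$ is the main technical content of the inductive step.
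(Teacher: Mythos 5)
The proposal is correct and takes essentially the same route as the paper's proof: restrict to $z_n=0$, subtract a polynomial lift of the $D_{n-1}$-expression built from the tower relations, observe that the difference vanishes to even order along every hyperplane $z_i=s+t\tau$ by $O(D_n)$-invariance and the elliptic transformation law, and divide by $(\omega_{-n,1}^{D_n})^2$ to lower the index. The only cosmetic difference is that you run an explicit inner induction on the index, dividing by $(\omega_{-n,1}^{D_n})^2$ one step at a time, whereas the paper divides by the full power $(\omega_{-(n+1),1}^{D_{n+1}})^{2d}$ at once and iterates.
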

\begin{proof}
We have proved the statement of the lemma for $n=2$.
Suppose that the statement holds for $D_n$. Let us 
consider an arbitraty weak Jacobi form  $\Phi_{k, m} \in J_{*, *}^{w, O}(D_{n+1})$. 
Its restriction $\Phi_{k, m} \big|_{z_{n+1}=0}$ is $O(D_n)$ (or $O'(D_4)$-invariant).
%Indeed, 
%the pull-back is at least $W(D_n)$-invariant. 
%Therefore it is invariant under transform $(z_i, z_{n+1}) \mapsto (-z_i, 
%-z_{n+1})$ for all $i$. But the restriction of this condition to $z_{n+1}
%=0$ is equivalent to invariance under transform $z_i \mapsto -z_i$ for 
%all $i$.  
If  $\Phi_{k, m} \big|_{z_{n+1}=0}\not\equiv 0$, then  by induction
$$
\Phi_{k, m} \big|_{z_{n+1}=0} = P_1(\varphi^{D_n}_{0, 1}, \varphi^{D_n}
_{-2,1}, \varphi^{D_n}_{-4,1},\varphi^{D_n}_{-6,2}, \ldots, \varphi^{D_n}
_{-2n+2, 2}, (\omega^{D_n}_{-n, 1})^2),
$$
where $P$ is a polynomial with coefficients from the ring of modular 
forms. We define another  Jacobi form in $J_{k, m}^{w, O(D_{n+1})}$
$$
\Psi_{k, m}=\Phi_{k, m}-P(\varphi^{D_{n+1}}_{0, 1}, 
\varphi^{D_{n+1}}_{-2,1}, 
\varphi^{D_{n+1}}_{-4,1},\frac{1}{12}\varphi^{D_{n+1}}_{-6,2}, \ldots, 
\frac{1}{12}\varphi^{D_{n+1}}_{-2n+2, 2}, \frac{1}{12}\varphi^{D_{n+1}}_{-2n, 2}). 
$$
By construction of the generators we see that  
$\Psi_{k, m}\big|_{z_{n+1}=0}=0$.  
Moreover, $\Psi_{k, m} \big|_{z_{j}=0} \equiv 0$  and the order of zero is 
equal to $2d>0$ because of invariance under the action of $O(D_{n+1})$ (or $O'(D_4)$). 
The second condition from Definition \ref{Jacobi} of Jacobi forms gives 
that divisor 
of  $\Phi_{k, m}$ contains all points $z_j = s+t \tau$ with $s, t \in 
\mathbb{Z}$. Hence the form $\Psi_{k, m}$ is divisible 
by $(\omega_{-(n+1),1}^{D_{n+1}})^d$. So, we can 
consider the holomorphic quotient
$$
\Phi_{k+2d(n+1), m-2d} = 
\frac{\Phi_{k, m}}{(\omega_{-(n+1),1}^{D_{n+1}})^{2d}}.
$$
which is still weak Jacobi form because the Fourier expansion of
$\omega_{-(n+1),1}^{D_{n+1}}$ starts with $q^0$.
If the index $m-2d$ is positive we continue this process.
If $m-2d=0$, then the Jacobi form
$\Phi_{k+2d(n+1), 0}\big|_{z_{n+1}=0}\in J_{*, *}^{w, O}(D_{n})$ 
is a $SL_2(\Bbb Z)$-modular form according our assumption about $D_n$.
\end{proof}

\begin{lemma}\label{AlgIndep}
For each $3 \leqslant n \leqslant 8$ the $O(D_n)$ (or $O'(D_4)$-invariant) weak Jacobi 
forms from  Theorem \ref{MainTheorem} are algebraically independent.
\end{lemma}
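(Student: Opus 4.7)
The plan is to argue by induction on $n$, taking the $D_2$ case (Proposition 2.2) as base. The central observation is that the specialisation $z_n = 0$ behaves very cleanly on the generators: by the relations recorded at the end of Theorem \ref{MainTheorem},
$$
\varphi^{D_n}_{-2k,1}\big|_{z_n=0} = \varphi^{D_{n-1}}_{-2k,1},\qquad
\varphi^{D_n}_{-2k,2}\big|_{z_n=0} = 12\,\varphi^{D_{n-1}}_{-2k,2}\quad(3\le k\le n-2),
$$
$$
\varphi^{D_n}_{-2(n-1),2}\big|_{z_n=0} = 12\,(\omega^{D_{n-1}}_{-(n-1),1})^2,\qquad
(\omega^{D_n}_{-n,1})^2\big|_{z_n=0} = 0.
$$
In other words, setting $z_n=0$ sends the $D_n$-generators other than $X:=(\omega^{D_n}_{-n,1})^2$ bijectively, up to nonzero scalars, onto the full list of generators of $J^{w,O}_{*,*}(D_{n-1})$ --- including the exceptional $D_{n-1}$ generator --- while $X$ itself is killed. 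A quick count ($n$ non-$X$ generators on the left, $n$ generators on the right) corroborates the bijection.

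Suppose the result holds for $D_{n-1}$, and let $P=0$ be a polynomial relation with coefficients in $M_*$ among the $D_n$-generators. I collect by powers of $X$:
$$
P = \sum_{j\ge 0} P_j\cdot X^j = 0,
$$
where each $P_j\in M_*[\varphi^{D_n}_{0,1},\varphi^{D_n}_{-2,1},\varphi^{D_n}_{-4,1},\varphi^{D_n}_{-6,2},\ldots,\varphi^{D_n}_{-2n+2,2}]$. Restricting to $z_n=0$ annihilates every term with $j\ge 1$, leaving $P_0\big|_{z_n=0}=0$. By the formulas above, this reads as a polynomial relation (with $M_*$-coefficients) among the $D_{n-1}$-generators, and the inductive hypothesis forces $P_0=0$ as a polynomial in the $D_n$-generators. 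Hence $P = X\cdot Q$ for some $Q$ of strictly smaller degree in $X$; since $X$ is a nonzero holomorphic function and the ring of holomorphic functions on $\mathcal{H}\times(D_n\otimes\mathbb{C})$ is an integral domain, $Q=0$ as a function. A secondary induction on the degree of $P$ in $X$ then yields $P_j=0$ for every $j$.

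The one point that requires genuine verification is that the specialisation really induces a bijection on generators; this is precisely the content of the tower diagram in Theorem \ref{MainTheorem}, and the crucial identification is $\varphi^{D_n}_{-2(n-1),2}\big|_{z_n=0}=12(\omega^{D_{n-1}}_{-(n-1),1})^2$, which transports the exceptional generator from level $n$ down to level $n-1$. Once this is in place, the induction closes uniformly in $n$ (with $O(D_4)$ replaced by $O'(D_4)$, with no further change to the argument), and the base case $n=2$ is supplied by Proposition \ref{Prop2A_1} via the isomorphism $D_2\simeq A_1\oplus A_1$.
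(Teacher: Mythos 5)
Your proof is correct and follows essentially the same route as the paper's: induction on $n$ via the restriction $z_n=0$, using the fact that this specialisation kills $(\omega^{D_n}_{-n,1})^2$ and carries the remaining generators bijectively (up to nonzero scalars, with $\varphi^{D_n}_{-2(n-1),2}\mapsto 12(\omega^{D_{n-1}}_{-(n-1),1})^2$ supplying the exceptional generator one level down) onto the full generating set for $D_{n-1}$. The paper phrases the descent as a minimal-degree counterexample argument where you expand in powers of $X$ and run a secondary induction, but the content is identical.
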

\begin{proof} We have proved  this lemma for $d=2$.
Suppose that the statement  holds for all lattices $D_{n_1}$ with 
$n_1 \leqslant n$ but does not hold for $D_{n+1}$. Then there is 
some polynomial relation between the generators for $D_{n+1}$ and  we can 
choose one with minimal degree. 
Consider the restriction of this relation to 
the lattice $D_{n}$ by setting $z_{n+1}=0$. Then the image of the form $
(\omega_{-(n+1),1}^{D_{n+1}})^2$ and only of this form vanishes 
because the constructed functions satisfy the tower condition. If after 
restriction some monomial in the polynomial relation does not vanish, then we obtain the polynomial relation for $D_{n}$ because of 
tower condition. Otherwise, each monomial is divisible by the form $(\omega_{-
(n+1),1}^{D_{n+1}})^2$, and we can get the polynomial relation of lower 
degree. In both cases we get a contradiction.
\end{proof}

\section{System of modular differential equations}
In this section we introduce some applications of previous results.
We show that the most important generators of index one of 
the graded ring $J_{*,*}^{w,O(D_8)}$ satisfy a system of differential 
equations.
Let us consider three $O(D_8)$-invariant generators 
$\varphi^{D_8}_{-4,1}$, $
\varphi^{D_8}_{-2,1}$ and $\varphi^{D_8}_{0,1}$ of index one. As it was 
mentioned above these forms 
satisfy the following differential equations:
\begin{align}\label{difeqs1}
3H_{-4}(\varphi^{D_8}_{-4,1})&=\varphi^{D_8}_{-2,1},
\end{align}
\begin{align}\label{difeqs2}
2H_{-2}(\varphi^{D_8}_{-2,1})-E_4\varphi^{D_8}_{-4,1}&=32\varphi^{D_8}_{0,1}.
\end{align}
We can construct the third equation using the structure of the ring of 
weak Jacobi forms.
Let us consider three $O(D_8)$-invariant of weak Jacobi forms of weight 2:
$E_6\varphi^{D_8}_{-4,1}$, $E_4\varphi_{-2,1}^{D_8}$, 
$H_0(\varphi^{D_8}_{0,1})$.
There is no weak Jacobi forms $\varphi_{2,1}$ of weight $2$ with zero 
$q^0$-term because in this case
$\varphi_{2,1}/\Delta$, where $\Delta$ is the Ramanujan $\Delta$-function of weight $12$, has weight $-10$. Hence, if we find the suitable 
linear combination of these forms with zero $q^0$-term, this combination 
would be equal to zero identically.
Using explicit constructions of all these forms from subsection 
{\bf 3.1} and
$$
H_0(\varphi^{D_8}_{0,1})=16-\sum_{j=1}^8 \zeta_j^{\pm 1} +  q\cdot(\ldots)
$$
we obtain the following equation:
\begin{align}\label{difeqs3}
E_6\varphi^{D_8}_{-4,1}+E_4\varphi_{-2,1}^{D_8}-48H_0(\varphi^{D_8}_{0,1})=0.
\end{align}
Together, equations \eqref{difeqs1}, \eqref{difeqs2} and \eqref{difeqs3} 
give the following  system of modular differential equations:
$$
\left\{
\begin{aligned}
3H_{-4}(\varphi^{D_8}_{-4,1})&=\varphi^{D_8}_{-2,1},\\
2H_{-2}(\varphi^{D_8}_{-2,1})-E_4\varphi^{D_8}_{-4,1}&=32\varphi^{D_8}
_{0,1},\\ E_6\varphi^{D_8}_{-4,1}+E_4\varphi_{-2,1}^{D_8}
&=48H_0(\varphi^{D_8}_{0,1}).
\end{aligned}
\right.
$$

It would be interesting to find an equation of hight degree of the basic 
Jacobi form $\varphi_{0,1}^{D_8}$. This weak Jacobi form provides the 
Borcherds-Enriques modular form, i.e. the automorphic discriminant of the 
moduli space of Enriques surfaces (see \cite{Gr18}). 
To find the general equation we can again use Theorem \ref{MainTheorem}.

Let us consider all forms of weight 8 for the lattice $D_8$ that can be 
obtained from $\varphi^{D_8}_{0,1}$ using differential operators and 
multiplication by modular forms. 
Then, if we construct non-zero form of weight 8 without $q^0$-term, 
we get one of two cases. If $q^1$-term of constructed form is not equal to 
zero, after dividing by $\Delta(\tau)$ we obtain a form proportional to
$\varphi^{D_8}_{-4,1}$. But if $q^1$-term is equal to zero, we get the 
form that is identically equal to zero because of the structure of the 
polynomial ring $J_{*,*}^{w,W}(D_8)$. In any case we obtain non-trivial 
differential equation. 

However, there is a way to find more complicated differential equations. 
For such relations we need to know the first two coefficients in 
Fourier expansion of $\varphi^{D_8}_{0,1}$. 
Using our explicit construction from subsection {\bf 3.1} again we obtain the following part of Fourier expansion 
$$
\varphi^{D_8}_{0,1}(\tau, z_1,\ldots,z_8) 
=8 + \sum_{j=1}^8 \zeta_j^{\pm 1} + $$
$$(128+36\sum_{j=1}^8 \zeta_j^{\pm 1}+8\sum_{j , k = 1}^8 \zeta_j^{\pm 
1}\zeta_k^{\pm 1}-8\sum \zeta_1^{\pm \frac{1}{2}}\ldots\zeta_8^{\pm 
\frac{1}{2}}+\sum_{j, k, l = 1}^8 \zeta_j^{\pm 1}\zeta_k^{\pm 
1}\zeta_l^{\pm 1})\cdot q+\ldots$$
Forms which can be obtained from $\varphi_{0,1}$ are listed in the following table:
\begin{small}
$$
\begin{array}{|c|c|c|c|c|}
\hline
\text{Weight 0} & \text{Weight 2} & \text{Weight 4} & \text{Weight 6} & \text{Weight 8}\\
\hline
 & & & & \\
\varphi^{D_8}_{0,1} & H_0(\varphi^{D_8}_{0,1}) & H_2(H_0(\varphi^{D_8}_{0,1})) & H_4(H_2(H_0(\varphi^{D_8}_{0,1}))) & H_6(H_4(H_2(H_0(\varphi^{D_8}_{0,1}))))\\
 & & & &\\
 &  & E_4 \varphi^{D_8}_{0,1} & H_4(E_4 \varphi^{D_8}_{0,1}) & H_6(H_4(E_4 \varphi^{D_8}_{0,1}))\\
 & & & &\\
 & & &E_4 H_0(\varphi^{D_8}_{0,1}) & H_6(E_4 H_0(\varphi^{D_8}_{0,1}))\\
 & & & &\\
 & & & E_6 \varphi^{D_8}_{0,1} & H_6(E_6 \varphi^{D_8}_{0,1})\\
 & & & &\\
 & & & & E_4 H_2(H_0(\varphi^{D_8}_{0,1}))\\
  & & & &\\
 & & & & E_6 H_0(\varphi^{D_8}_{0,1})\\
  & & & &\\
 & & & & E_4^2 \varphi^{D_8}_{0,1}\\
 & & & &\\
\hline 
\end{array}
$$
\end{small}
In this case an arbitrary form of weight 8 can be written as
$$
F_{8,1}=a_1 H_6\circ H_4 \circ H_2 \circ H_0(\varphi^{D_8}_{0,1})
+a_2H_6\circ H_4(E_4\varphi^{D_8}_{0,1}))+a_3H_6(E_4H_2(\varphi^{D_8}_{0,1}))+
$$
$$
+a_4H_6(E_6\varphi^{D_8}_{0,1})+a_5E_4H_2\circ H_0(\varphi^{D_8}_{0,1})+a_6E_6H_0(\varphi^{D_8}_{0,1})+a_7E_4^2\varphi^{D_8}_{0,1}.
$$
Direct calculation  shows that the  $q^0$-term  of $F_{8,1}$ 
is equal to zero for any $a_1$, $a_2$, $a_3$, $a_4$, $a_5$ if  
$$
\begin{aligned}
a_6&=\frac{1}{27}\cdot (a_1+18a_2+9a_3-27a_4),\\
a_7&=- \frac{1}{162}\cdot (2a_1+36a_2+9a_3-81a_4+9a_5).
\end{aligned}
$$
However, for any $a_1, a_2, a_3, a_4, a_5$ we obtain that the $q^1$-term
of the Fourier expansion of $F_{8,1}$ is also equal to zero.
It follows that 
$$
\frac{F_{8,1}(\tau, z_1,\ldots,z_8)}{\Delta^2(\tau)}\in
J_{-16,1}^{w,O(D_8)}.
$$
But the last space is trivial according to the Theorem \ref{MainTheorem}. Thus we can add the 
third differential equation to the equations (3) and (4)
\begin{multline}\label{DEphi01}
a_1 H_6\circ H_4 \circ H_2 \circ H_0(\varphi^{D_8}_{0,1})
+a_2H_6\circ H_4(E_4\varphi^{D_8}_{0,1}))
+a_3H_6(E_4H_2(\varphi^{D_8}_{0,1}))\\
+a_4H_6(E_6\varphi^{D_8}_{0,1})+a_5E_4H_2\circ H_0(\varphi^{D_8}_{0,1})
+\frac{1}{27}(a_1+18a_2+9a_3-27a_4)E_6H_0(\varphi^{D_8}_{0,1})\\
- \frac{1}{162}(2a_1+36a_2+9a_3-81a_4+9a_5)E_4^2\varphi^{D_8}_{0,1}=0.
\end{multline}

We can compare the modular differential equation \eqref{DEphi01} for 
$\varphi_{0,1}^{D_8}$ with similar equation for 
$\varphi_{0,1}(\tau,z)\in J^{w, W}_{0,1}(A_1)$.
For the last function  we have
\begin{equation}\label{eqA1}
a_1 H_4\circ H_2 \circ H_0(\varphi_{0,1}) + a_2 
E_4H_0(\varphi_{0,1}+a_3H_4(E_4\varphi_{0,1})+a_4E_6\varphi_{0,1}=0
\end{equation}
with 
$$
a_3=\frac{a_1}{50688}-\frac{91a_2}{11} \quad \text{and} \quad 
a_4=\frac{a_1}{76032}-\frac{115a_2}{88}.
$$
We note that $\varphi_{0,1}$ is equal to the elliptic genus of the Enriques 
surfaces (see \cite{Gr99}). Therefore (\ref{eqA1}) above is a general 
differential equation for the elliptic genus of Enriques or $K3$ surface.

We can write a similar equation for the second generator 
$\varphi_{-2,1}(\tau,z)$ for the lattice $A_1$.
As we know (see \cite{GN98}), 
$$H_{-2}(\varphi_{-2,1}^{A_1})(\tau, z) = -\frac{1}{24}\varphi^{A_1}_{0,1}
(\tau, z).$$
If we apply modular differential operator to $\varphi^{A_1}_{0,1}(\tau, z)
$, we obtain
$$H_{0}(\varphi^{A_1}_{0,1})(\tau, z) = -\frac{5}{24}\zeta+\frac{10}{24}-
\frac{5}{24}\zeta^{-1}+q\cdot(\ldots).$$
And therefore we have the following differential equation
$$
H_0\circ H_{-2}(\varphi^{A_1}_{-2,1})-2880E_4\varphi^{A_1}_{-2,1}=0,
$$
because there are no non-zero weak Jacobi forms of index one and weight 
$-10$.

Almost all constructions of this paper can be generalized to the case
of the lattice $D_n$ for any $n>8$.
We are planing to study modular differential equations for the 
generators of the bigraded  ring $J_{*,*}^{w,O(D_n)}$ in the next 
publication.

Dmitry Adler\par
\smallskip
International laboratory of mirror symmetry and automorphic forms
\par
National Research University Higher School of Economics, Moscow
\par
\smallskip
{\tt dmitry.v.adler@gmail.com}
\vskip0.5cm

Valery Gritsenko \par
\smallskip

Laboratoire Paul Painlev\'e, Universit\'e de Lille, France \par
\smallskip
International laboratory of mirror symmetry and automorphic forms
\par
National Research University Higher School of Economics, Moscow
\par
\smallskip
{\tt Valery.Gritsenko@univ-lille.fr}


\begin{thebibliography}{GN2} 

\bibitem[Be1]{Ber1} M. Bertola, 
{\it Frobenius manifold structure on orbit space of Jacobi group; Part 
I}, Differential Geom. Appl. {\bf13} (2000), 19--41. 

\bibitem[Be2]{Ber2} M. Bertola, {\it Frobenius manifold structure on orbit space of Jacobi group; Part II.} Differential Geom. Appl. {\bf 13 (3)} (2000), 213--233.

\bibitem[Bo]{Bo} R.E.~Borcherds,
{\it The moduli space of Enriques surfaces and the fake monster Lie
superalgebra.} Topology {\bf 35} (1996), 699--710.

\bibitem[Bou]{Bur} N. Bourbaki, 
{\it Groupes et Alg\`ebres de Lie, Ch. 4, 5, 6.} Masson, 1981.

\bibitem[CG]{CG} F. Cl\'ery,   V. Gritsenko,
\textit{Modular forms of orthogonal type and Jacobi theta-series.}
Abh. Math. Semin. Univ. Hambg \textbf{83} (2013), 187--217.

\bibitem[DZ]{DZ} B. Dubrovin, Y. Zhang, 
\textit{Extended affine Weyl groups and Frobenius manifolds.} Compositio Mathem. {\bf111} (1998), 167--219.

\bibitem[EZ]{EZ} M. Eichler, D. Zagier, 
{\it The theory of Jacobi forms.} Progress in Mathematics {\bf 55}.
Birkh\"auser, Boston, Mass. (1985).

\bibitem[G1]{Gr94} V. Gritsenko, \textit{Modular forms and moduli spaces of abelian and K3 surfaces.} Algebra i Analiz {\bf 6} (1994), 65--102; English translation in St. Petersburg Math. J. {\bf 6} (1995), 1179--1208.

\bibitem[G2]{Gr99} V.~Gritsenko,
{\it Elliptic genus of Calabi--Yau manifolds and Jacobi and Siegel modular forms.} Algebra i Analiz {\bf 11} (1999), 100--125;
English translation in St. Petersburg Math. J. {\bf 11} (2000), 781--804.
   
\bibitem[G3]{Gr18} V. Gritsenko, \textit{Reflective modular forms and their applications.} Russian Math. Surveys \textbf{73:5} (2018), 
797--864. (See arxiv: 1005.3753 and arXiv: 1203.6503.)

\bibitem[G4]{GrJ} V.~Gritsenko, 
\textit{Jacobi modular forms: 30~ans apr\`es.}
Course of lectures on Coursera 2016--2018.
\noindent

{\tt{https://ru.coursera.org/learn/modular-forms-jacobi}}

\bibitem[GN1]{GN98} V. Gritsenko, V. Nikulin, 
\textit{Automorphic forms and Lorentzian Kac--Moody algebras. Part II.}  Internat. J. Math. \textbf{9} (1998), 201--275.

\bibitem[GN2]{GN17} V. Gritsenko, V. V. Nikulin,
{\it Lorentzian Kac--Moody algebras with Weyl
groups of $2$-reflections.} 
Proc. Lond. Math. Soc. {\bf 116:3} (2018), 485--533.

\bibitem[K]{K} S.~Kondo, 
{\it The moduli space of Enriques surfaces
and Borcherds products.}
J. Algebraic Geometry \textbf{11} (2002), 601--627.

\bibitem[Lo1]{Lo1} E. Looijenga, {\it Root Systems abd Elliptic Curves.} 
Inv. Mathem. {\bf 38} (1976), 17--32.
\bibitem[Lo2]{Lo2}E. Looijenga, {\it Invariant Theory for Generalized Root 
Systems.} Inv. Mathem. {\bf 61} (1980), 1--32.

\bibitem[M]{M} D.~Mumford, {\it Tata lectures on theta I.}
Progress in Mathem. {\bf 28}, Birkh\"auser, Boston, Mass., 1983.

\bibitem[S1]{Sa2} K.Saito, {\it Extended Affine Root Systems I (Coxeter 
transformations).} Publ. RIMS, {\bf  21} (1985), 75--179.

\bibitem[S2]{Sa3} K. Saito, {\it Extended Affine Root Systems II (Flat 
Invariants).} Publ. RIMS,  {\bf 26} (1990), 15--78.

\bibitem[SYS]{Sa1} K. Saito, T. Yano, J. Sekiguchi, 
{\it On a Certain generator system of the ring of invariants of a finite 
reflection group.} Comm. in Algebra, {\bf 8} (1980), 373--400.

\bibitem[Sat]{Sat} I. Satake
{\it Flat Structure for the Simple Elliptic Singularity
of Type  ${\widetilde E}_6$ and Jacobi Form.}
Proc. Japan Acad., {\bf 69}, Ser. A (1993) No. 7, 247--251.

\bibitem[W]{Wirt} K. Wirthm\"uller, \textit{Root systems and Jacobi forms.} 
Comp. Math. {\bf 82} (1992), 293--354.

\bibitem[Wa]{Wa} H. Wang, \textit{Weyl invariant $E_8$ Jacobi forms.} 
arXiv:1801.08462.

\bibitem[Y]{Y} K.-I.~Yoshikawa,
\textit{Calabi--Yau threefolds of Borcea--Voisin, analytic torsion,
and Borcherds products.}
``From probability to geometry~II", Ast\'erisque {\bf 328} (2009),
355--393.
\end{thebibliography}
\end{document}